\theoremstyle{plain}
\newtheorem{thm}{Theorem}[section]
\newtheorem{cor}[thm]{Corollary}
\newtheorem{lem}[thm]{Lemma}
\newtheorem{prop}[thm]{Proposition}
\newtheorem{defn}[thm]{Definition}
\newtheorem{exa}[thm]{Example}
\newtheorem{rem}[thm]{Remark}
\begin{document}

\title [{{On Graded Strongly $1$-Absorbing Primary Ideals}}]{On Graded Strongly $1$-Absorbing Primary Ideals}

 \author[{{R. Abu-Dawwas }}]{\textit{Rashid Abu-Dawwas }}

\address
{\textit{Rashid Abu-Dawwas, Department of Mathematics, Yarmouk University, Irbid, Jordan.}}
\bigskip
{\email{\textit{rrashid@yu.edu.jo}}}

 \subjclass[2010]{13A02, 16W50}

\date{}

\begin{abstract} Let $G$ be a group with identity $e$ and $R$ be a $G$-graded commutative ring with nonzero unity $1$. In this article, we introduce the concept of graded strongly $1$-absorbing primary ideals. A proper graded ideal $P$ of $R$ is said to be a graded strongly $1$-absorbing primary ideal of $R$ if whenever nonunit homogeneous elements $x, y, z\in R$ such that $xyz\in P$, then either $xy\in P$ or $z\in Grad(\{0\})$ (the graded radical of $\{0\}$). Several properties of graded strongly $1$-absorbing primary ideals are investigated. Many results are given to disclose the relations between this new concept and others that already exist. Namely, the graded prime ideals, the graded primary ideals, and the graded $1$-absorbing primary ideals.
\end{abstract}

\keywords{Graded primary ideals, graded $1$-absorbing primary ideals.
 }
 \maketitle

\section{Introduction}

Throughout this article, all rings are commutative with a nonzero unity $1$. Let $G$ be a group with identity $e$. Then a ring $R$ is said to be $G$-graded if $R=\displaystyle\bigoplus_{g\in G} R_{g}$ with $R_{g}R_{h}\subseteq R_{gh}$ for all $g, h\in G$, where $R_{g}$ is an additive subgroup of $R$ for all $g\in G$. The elements of $R_{g}$ are called homogeneous of degree $g$. If $x\in R$, then $x$ can be written uniquely as $\displaystyle\sum_{g\in G}x_{g}$, where $x_{g}$ is the component of $x$ in $R_{g}$. Also, we set $h(R)=\displaystyle\bigcup_{g\in G}R_{g}$. Moreover, it has been proved in \cite{Nastasescue} that $R_{e}$ is a subring of $R$ and $1\in R_{e}$. Let $I$ be an ideal of a graded ring $R$. Then $I$ is said to be a graded ideal if $I=\displaystyle\bigoplus_{g\in G}(I\cap R_{g})$, i.e., for $x\in I$, $x=\displaystyle\sum_{g\in G}x_{g}$, where $x_{g}\in I$ for all $g\in G$. An ideal of a graded ring need not be graded (see \cite{Nastasescue}).

Let $I$ be a proper graded ideal of $R$. Then the graded radical of $I$ is $Grad(I)=\left\{x=\displaystyle\sum_{g\in G}x_{g}\in R:\mbox{ for all }g\in G,\mbox{ there exists }n_{g}\in \mathbb{N}\mbox{ such that }x_{g}^{n_{g}}\in I\right\}$. Note that $Grad(I)$ is always a graded ideal of $R$ (see \cite{Refai Hailat}).

\begin{prop}(\cite{Farzalipour})\label{1} Let $R$ be a $G$-graded ring.

\begin{enumerate}

\item If $I$ and $J$ are graded ideals of $R$, then $I+J$, $IJ$ and $I\bigcap J$ are graded ideals of $R$.

\item If $a\in h(R)$, then $Ra$ is a graded ideal of $R$.
\end{enumerate}
\end{prop}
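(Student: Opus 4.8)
The plan is to verify in each case the defining condition of a graded ideal: an ideal $K$ of $R$ is graded precisely when every homogeneous component of every element of $K$ again lies in $K$. Since $I$ and $J$ are graded, they are additively generated by their homogeneous parts $h(I)=I\cap h(R)$ and $h(J)=J\cap h(R)$, and I will exploit this together with the grading axiom $R_gR_h\subseteq R_{gh}$.

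For $I+J$ and $I\cap J$ the argument is immediate. Given $x\in I+J$, write $x=u+v$ with $u\in I$ and $v\in J$; decomposing $u=\sum_g u_g$ and $v=\sum_g v_g$, gradedness of $I$ and $J$ gives $u_g\in I$ and $v_g\in J$, whence the degree-$g$ component $x_g=u_g+v_g\in I+J$. For the intersection, if $x=\sum_g x_g\in I\cap J$ then $x_g\in I$ (as $I$ is graded) and $x_g\in J$ (as $J$ is graded), so $x_g\in I\cap J$; this settles both cases.

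The product $IJ$ is the one requiring care, and I expect it to be the main (though still routine) obstacle, because a general element of $IJ$ is a finite sum $\sum_k a_kb_k$ rather than a single product. First I would pass to homogeneous generators: writing each $a_k=\sum_g (a_k)_g$ and $b_k=\sum_h (b_k)_h$ with $(a_k)_g\in I$ and $(b_k)_h\in J$, I expand $\sum_k a_kb_k=\sum_k\sum_{g,h}(a_k)_g(b_k)_h$. Each summand $(a_k)_g(b_k)_h$ is homogeneous of degree $gh$ by $R_gR_h\subseteq R_{gh}$, and it lies in $IJ$. Collecting terms of a fixed degree $\ell$ shows that the degree-$\ell$ component of $\sum_k a_kb_k$ equals $\sum_{gh=\ell}\sum_k (a_k)_g(b_k)_h\in IJ$, so $IJ$ is graded. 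Finally, part (2) is the special case in which one factor is already homogeneous: if $a\in R_g$ and $x=ra\in Ra$, then decomposing $r=\sum_h r_h$ gives $x=\sum_h r_ha$ with $r_ha\in R_{hg}\cap Ra$, so the degree-$\ell$ component of $x$ is $r_{\ell g^{-1}}a\in Ra$, establishing that $Ra$ is a graded ideal.
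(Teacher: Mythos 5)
Your proof is correct. Note that the paper offers no argument of its own for this proposition --- it is simply imported from \cite{Farzalipour} --- so there is no internal proof to compare against; your direct verification of the defining condition (every homogeneous component of an element of the ideal lies back in the ideal) is the standard route, and every step checks out. The two places that genuinely need care both survive scrutiny: for $IJ$ you correctly observe that a general element is a finite sum $\sum_k a_k b_k$, expand each factor into homogeneous parts lying in $I$ and $J$ respectively, and regroup the products $(a_k)_g(b_k)_h\in R_{gh}\cap IJ$ by degree, so that each graded component of the sum is itself a finite sum of elements of $IJ$; and in part (2) you correctly use that $G$ is a group, so for $a\in R_g$ the degree-$\ell$ component of $ra$ comes from the single index $h=\ell g^{-1}$ and equals $r_{\ell g^{-1}}a\in Ra$. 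One could alternatively deduce (2) from (1) by noting $Ra=RI'$ where $I'$ is generated by the homogeneous element $a$, but your direct computation is just as clean.
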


Recall that a proper graded ideal $Q$ of $R$ is said to be graded primary if whenever $x,y\in h(R)$ with $xy\in Q$, then $x\in Q$ or $y\in Grad(Q)$. In this case $P =Grad(Q)$ is a graded prime ideal of $R$, and $Q$ is said to be graded $P$-primary. In \cite{Zoubi Sharafat}, Al-Zoubi and Sharafat introduced a generalization of graded primary ideals called graded $2$-absorbing primary ideals. A proper graded ideal $I$ of $R$ is called a graded $2$-absorbing primary ideal of $R$ if whenever $x, y, z\in h(R)$ and $xyz\in I$, then $xy\in I$ or $xz\in Grad(I)$ or $yz\in Grad(I)$. The concept of graded $2$-absorbing primary ideals is generalized in many ways (see for example \cite{Uregen}). Recently in \cite{Bataineh Dawwas}, we consider a new class of graded ideals called the class of graded $1$-absorbing primary ideals. A proper graded ideal $P$ of $R$ is said to be a graded $1$-absorbing primary ideal of $R$ if whenever nonunit elements $x, y, z\in h(R)$ such that $xyz\in P$, then $xy\in P$ or $z\in Grad(P)$. Clearly, every graded primary ideal is graded $1$-absorbing primary ideal. The next example shows that the converse is not true in general.

\begin{exa}\cite{Bataineh Dawwas} Let $K$ be a field and assume that $R=K[X,Y]$ is $\mathbb{Z}$-graded with $degX= 1 =degY$. Consider the graded ideal $P= (X^{2},XY)$ of $R$. Then $Grad(P)= (X)$. Since for $X.Y.X\in P$, either $X.Y\in P$ or $X\in Grad(P)$, $P$ is a graded $1$-absorbing primary ideal of $R$. On the other hand, $P$ is not graded primary ideal of $R$ by (\cite{Soheilnia Darani}, Example 2.11).
\end{exa}

Also, it is clear that every graded $1$-absorbing primary ideal is graded $2$-absorbing primary ideal. The next example shows that the converse is not true in general.

\begin{exa}\cite{Bataineh Dawwas} Let $R=\mathbb{Z}[i]$ and $G=\mathbb{Z}_{2}$. Then $R$ is $G$-graded by $R_{0}=\mathbb{Z}$ and $R_{1}=i\mathbb{Z}$. Consider $P=12R$. Then as $12\in h(R)$, $P$ is a graded ideal of $R$. By (\cite{Zoubi Sharafat}, Example 2.2 (ii)), $P$ is a graded $2$-absorbing primary ideal of $R$. On the other hand, $2, 3\in h(R)$ such that $2.2.3\in P$, but neither $2.2\in P$ nor $3\in Grad(P)$. So, $P$ is not graded $1$-absorbing primary ideal of $R$.
\end{exa}

In this article, we follow \cite{Al-Mahdi} to introduce and study a subclass of the class of graded $1$-absorbing primary ideals that does not necessarily contain all graded primary ideals. A proper graded ideal $P$ of $R$ is called graded strongly $1$-absorbing primary if whenever nonunit elements $x, y, z\in h(R)$ and $xyz\in P$, then $xy\in P$ or $z\in Grad(\{0\})$. Several properties of graded strongly $1$-absorbing primary ideals are investigated. Many results are given to disclose the relations between this new concept and others that already exist. Namely, the graded prime ideals, the graded primary ideals, and the graded $1$-absorbing primary ideals.

\section{Graded Strongly $1$-Absorbing Primary Ideals}

In this section, we introduce and study the concept of graded strongly $1$-absorbing primary ideals.

\begin{defn}Let $R$ be a graded ring and $P$ be a proper graded ideal of $R$. Then $P$ is said to be a graded strongly $1$-absorbing primary ideal of $R$ if whenever $x, y, z\in h(R)$ such that $xyz\in P$, then either $xy\in P$ or $z\in Grad(\{0\})$.
\end{defn}

Clearly, every graded strongly $1$-absorbing primary ideal is a graded $1$-absorbing primary ideal. However, the next example shows that the converse is not true in general. The same example shows that a graded prime ideal (in particular a graded primary ideal) is not necessarily a graded strongly $1$-absorbing primary ideal.

\begin{exa}\label{Example 2.1} Let $R=\mathbb{Z}[i]$ and $G=\mathbb{Z}_{2}$. Then $R$ is $G$-graded by $R_{0}=\mathbb{Z}$ and $R_{1}=i\mathbb{Z}$. Consider $P=3R$. Then as $3\in h(R)$, $P$ is a graded ideal of $R$. It is clear that $P$ is graded prime, and then graded $1$-absorbing primary. However, $P$ is not graded strongly 1-absorbing primary. Indeed, $2, 3\in h(R)$ with $2.2.3\in P$ but neither $2.2\in P$ nor $3\in Grad(\{0\})=\{0\}$.
\end{exa}

A graded ring is said to be graded local if it has one graded maximal ideal only. The next theorem gives a characterization of graded strongly $1$-absorbing primary ideals.

\begin{thm}\label{Theorem 2.2} Let $P$ be a proper graded ideal of $R$. Then $P$ is graded strongly $1$-absorbing primary if and only if
\begin{enumerate}
\item $P$ is graded $1$-absorbing primary and $Grad(P) =Grad(\{0\})$, or
\item $R$ is graded local with graded maximal ideal $X =Grad(P)$ and $X^{2}\subseteq P$.
\end{enumerate}
\end{thm}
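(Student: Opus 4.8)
The plan is to prove both implications, treating the backward direction as routine and concentrating on the forward direction. For the backward direction, suppose first that (1) holds: then for nonunit homogeneous $x,y,z$ with $xyz\in P$, graded $1$-absorbing primariness gives $xy\in P$ or $z\in Grad(P)=Grad(\{0\})$, which is exactly the graded strongly $1$-absorbing primary condition. Suppose instead that (2) holds. The key observation is that in a graded local ring with graded maximal ideal $X$ a homogeneous element is a unit precisely when it lies outside $X$; hence any nonunit homogeneous $x,y,z$ all lie in $X$, so $xy\in X^{2}\subseteq P$ and the condition holds at once. I would record this graded-local fact (the homogeneous nonunits coincide with $X\cap h(R)$) as a short preliminary, since both directions lean on it.

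For the forward direction, I would first note that $P$ is automatically graded $1$-absorbing primary (as already observed in the excerpt, the strong version implies the ordinary one) and that $Grad(\{0\})\subseteq Grad(P)$ always. I then split on whether these radicals coincide. If $Grad(P)=Grad(\{0\})$ we are in case (1) and nothing more is needed. Otherwise, since $Grad(P)$ and $Grad(\{0\})$ are graded ideals, their inequality forces a \emph{homogeneous} element $a\in Grad(P)\setminus Grad(\{0\})$; fix $n$ with $a^{n}\in P$ and note that $a$ is not nilpotent, so $a^{n}\notin Grad(\{0\})$ and $a^{n}$ is a nonunit homogeneous element. This $a^{n}$ is the engine of the argument: being homogeneous, non-nilpotent, and having a power in $P$, it can be fed into the $z$-slot of the defining condition to always force the first alternative.

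Concretely I would carry out two computations. First, for an arbitrary nonunit homogeneous $r$, apply the condition to $r\cdot r\cdot a^{n}\in P$: since $a^{n}\notin Grad(\{0\})$ we must have $r^{2}\in P$, whence $r\in Grad(P)$. Thus every nonunit homogeneous element lies in the proper graded ideal $Grad(P)$, while $Grad(P)$ contains no units; this identifies the homogeneous nonunits with $Grad(P)\cap h(R)$ and, by the preliminary fact, shows $R$ is graded local with graded maximal ideal $X=Grad(P)$. Second, for homogeneous $u,v\in X$, apply the condition to $u\cdot v\cdot a^{n}\in P$ to obtain $uv\in P$; since $X$ is generated by its homogeneous elements, this yields $X^{2}\subseteq P$, completing case (2).

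The main obstacle is not any single estimate but getting the organizing idea right: recognizing that a non-nilpotent homogeneous element of $Grad(P)$ whose power lies in $P$ can be used as the absorbed factor $z$ to convert the strongly $1$-absorbing condition into the clean statements $r^{2}\in P$ and $uv\in P$. The only other point demanding care is the graded-local bookkeeping — verifying that ``all homogeneous nonunits lie in a fixed proper graded ideal'' genuinely yields graded locality with that ideal maximal — which I would settle once in the preliminary observation and then invoke in both directions.
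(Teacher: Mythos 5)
Your proposal is correct and follows essentially the same route as the paper's proof: both reduce the case $Grad(P)\neq Grad(\{0\})$ to showing that the product of any two homogeneous nonunits lies in $P$, and then deduce graded locality with $X=Grad(P)$ and $X^{2}\subseteq P$ (and the backward direction is handled identically). The only difference is organizational: the paper argues by contradiction, placing homogeneous components of arbitrary elements of $P$ in the $z$-slot to force $P\subseteq Grad(\{0\})$, whereas you run the contrapositive directly by exhibiting a non-nilpotent homogeneous $a^{n}\in P\setminus Grad(\{0\})$ and using it as the absorbed factor -- the underlying computation is the same.
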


\begin{proof} Suppose that $P$ is graded strongly $1$-absorbing primary. Then it is clear that $P$ is graded $1$-absorbing primary. Assume that $Grad(P)\neq Grad(\{0\})$. Assume that there exist nonunit elements $a, b\in h(R)$ such that $ab\notin P$. Let $x\in P$. Then since $P$ is a graded ideal, $x_{g}\in P$ for all $g\in G$, and then $abx_{g}\in P$ for all $g\in G$. Since $P$ is graded strongly $1$-absorbing primary, $x_{g}\in Grad(\{0\})$ for all $g\in G$, which implies that $x\in Grad(\{0\})$. So, $P\subseteq Grad(\{0\})$, and hence $Grad(P)=Grad(\{0\})$, which is a contradiction. Thus, $ab\in P$ for each nonunit elements $a, b\in h(R)$. Let $X$ be a graded maximal ideal of $R$. We have that $X^{2}\subseteq P$, and then $X =Grad(X^{2})\subseteq Grad(P)$. Thus,
$X =Grad(P)$ for each graded maximal ideal $X$ of $R$. Hence, we conclude that $R$ is graded local with graded maximal ideal $X =Grad(P)$ and $X^{2}\subseteq P$. Conversely, if $P$ is graded $1$-absorbing primary and $Grad(P) =Grad(\{0\})$, then $P$ is clearly a graded strongly $1$-absorbing primary ideal of $R$. Suppose that $R$ is graded local with graded maximal ideal $X =Grad(P)$ and $X^{2}\subseteq P$. Then for each nonunit elements $a, b\in h(R), ab\in X^{2}\subseteq P$, and then $P$ is trivially a graded strongly $1$-absorbing primary ideal of $R$.
\end{proof}

A graded commutative ring $R$ with unity is said to be a graded domain if $R$ has no homogeneous zero divisors. A graded commutative ring $R$ with unity is said to be a graded field if every nonzero homogeneous element of $R$ is unit. The next example shows that a graded field need not be a field.

\begin{exa}Let $R$ be a field and suppose that $F=\left\{x+uy:x, y\in R, u^{2}=1\right\}$. If $G=\mathbb{Z}_{2}$, then $F$ is $G$-graded by $F_{0}=R$ and $F_{1}=uR$. Let $a\in h(F)$ such that $a\neq0$. If $a\in F_{0}$, then $a\in R$ and since $R$ is a field, we have $a$ is a unit element. Suppose that $a\in F_{1}$. Then $a=uy$ for some $y\in R$. Since $a\neq0$, we have $y\neq0$, and since $R$ is a field, we have $y$ is a unit element, that is $zy=1$ for some $z\in R$. Thus, $uz\in F_{1}$ such that $(uz)a=uz(uy)=u^{2}(zy)=1.1=1$, which implies that $a$ is a unit element. Hence, $F$ is a graded field. On the other hand, $F$ is not a field since $1+u\in F-\{0\}$ is not a unit element since $(1+u)(1-u)=0$.
\end{exa}

\begin{rem}\label{Remark 2.3} Let $R$ be a graded local ring. If $P$ is a graded strongly $1$-absorbing primary ideal of $R$, then $Grad(P)$ needs not to be graded maximal. To see that, it suffices to consider any graded local domain which is not graded field. Then $\{0\} =Grad(\{0\})$ is graded strongly $1$-absorbing primary which is not graded maximal.
\end{rem}

\begin{cor}\label{Corollary 2.4} Let $P$ be a graded prime ideal of $R$. Then $P$ is graded strongly $1$-absorbing primary if and only if
\begin{enumerate}
\item $P =Grad(\{0\})$, or
\item $R$ is graded local with graded maximal ideal $P$.
\end{enumerate}
\end{cor}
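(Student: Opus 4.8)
The plan is to derive Corollary~\ref{Corollary 2.4} as a direct specialization of Theorem~\ref{Theorem 2.2} to the case where $P$ is graded prime. The key simplifying observation is that a graded prime ideal $P$ satisfies $Grad(P)=P$, since $P$ is already graded radical (being prime). With this in hand, the two conditions of Theorem~\ref{Theorem 2.2} should collapse into the two conditions stated in the corollary.

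First I would recall that a graded prime ideal is automatically graded $1$-absorbing primary, so the hypothesis ``$P$ is graded $1$-absorbing primary'' in Theorem~\ref{Theorem 2.2}(1) is satisfied for free and can be dropped from the statement. For the forward direction, I would assume $P$ is graded strongly $1$-absorbing primary and apply Theorem~\ref{Theorem 2.2} to get one of its two alternatives. In case~(1), the condition $Grad(P)=Grad(\{0\})$ becomes $P=Grad(\{0\})$ after substituting $Grad(P)=P$, giving condition~(1) of the corollary. In case~(2), the condition says $R$ is graded local with graded maximal ideal $X=Grad(P)=P$ and $X^2\subseteq P$; substituting $X=P$, the inclusion $P^2\subseteq P$ is automatic for any ideal, so this reduces exactly to ``$R$ is graded local with graded maximal ideal $P$,'' which is condition~(2) of the corollary.

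For the converse, I would start from either condition of the corollary and recover the corresponding condition of Theorem~\ref{Theorem 2.2}. If $P=Grad(\{0\})$, then using $Grad(P)=P$ I get $Grad(P)=Grad(\{0\})$, and together with the fact that $P$ (being graded prime) is graded $1$-absorbing primary, Theorem~\ref{Theorem 2.2}(1) applies and yields that $P$ is graded strongly $1$-absorbing primary. If instead $R$ is graded local with graded maximal ideal $P$, then setting $X=P=Grad(P)$ and noting $X^2=P^2\subseteq P$, Theorem~\ref{Theorem 2.2}(2) applies and again gives the conclusion.

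I do not anticipate a genuine obstacle here, since the entire argument is a translation through the identity $Grad(P)=P$. The one point requiring a small amount of care is verifying that a graded prime ideal really is graded radical, i.e.\ $Grad(P)=P$; this follows because if $x=\sum_g x_g$ has each homogeneous component $x_g$ satisfying $x_g^{n_g}\in P$, then primeness forces each $x_g\in P$, whence $x\in P$. Once this is settled, the rest is a routine substitution, and I would present the corollary as an immediate consequence of Theorem~\ref{Theorem 2.2}.
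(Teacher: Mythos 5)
Your proposal is correct and matches the paper's intent exactly: the paper gives no separate proof of Corollary~\ref{Corollary 2.4}, presenting it as an immediate specialization of Theorem~\ref{Theorem 2.2} via the identity $Grad(P)=P$ for a graded prime ideal, which is precisely your argument. Your added verification that a graded prime ideal is graded radical and is graded $1$-absorbing primary fills in the (routine) details the paper leaves implicit.
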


\begin{lem}\label{Theorem 2}\cite{Bataineh Dawwas} Let $R$ be a graded ring and $P$ be a graded ideal of $R$. If $P$ is a graded $1$-absorbing primary ideal of $R$, then $Grad(P)$ is a graded prime ideal of $R$.
\end{lem}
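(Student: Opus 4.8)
The plan is to verify directly the two defining properties of a graded prime ideal for $Grad(P)$. Since it is already recorded (see \cite{Refai Hailat}) that $Grad(P)$ is a graded ideal, and since $P$ proper forces $Grad(P)$ proper (otherwise $1\in Grad(P)$ would give $1=1^{n}\in P$), the whole task reduces to the homogeneous prime condition: for $x,y\in h(R)$ with $xy\in Grad(P)$, I must produce $x\in Grad(P)$ or $y\in Grad(P)$. The one fact I would isolate at the outset is that for a homogeneous element $a$ one has $a\in Grad(P)$ if and only if $a^{k}\in P$ for some $k\in\mathbb{N}$; this is immediate from the definition of $Grad$ because $a$ lies in a single graded component.

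First I would dispose of the case in which $x$ or $y$ is a unit. Say $x$ is a unit; then, because $Grad(P)$ is an ideal, $y=x^{-1}(xy)\in Grad(P)$ and we are done (symmetrically if $y$ is a unit). So I may assume both $x$ and $y$ are nonunits. From $xy\in Grad(P)$ and the homogeneous radical characterization above, $(xy)^{n}\in P$, i.e. $x^{n}y^{n}\in P$, for some $n\geq 1$. Multiplying by $x$ and using that $P$ is an ideal yields $x^{n+1}y^{n}\in P$, which I would write as the product of the three nonunit homogeneous elements $x\cdot x^{n}\cdot y^{n}$. Applying the graded $1$-absorbing primary hypothesis to this product gives either $x\cdot x^{n}=x^{n+1}\in P$, whence $x\in Grad(P)$, or $y^{n}\in Grad(P)$, whence $y^{nm}\in P$ for some $m$ and so $y\in Grad(P)$. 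In either alternative the prime condition holds, completing the argument.

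The step I expect to be the only real subtlety is guaranteeing that all three factors fed into the $1$-absorbing primary definition are genuinely nonunit homogeneous elements. The naive factorization $x^{n}y^{n}=x\cdot(x^{n-1}y^{n-1})\cdot y$ fails exactly when $n=1$, since the middle factor degenerates to the unit $1$; this is why I would first absorb an extra factor of $x$ to pass to $x^{n+1}y^{n}$ and factor it as $x\cdot x^{n}\cdot y^{n}$, where the middle factor $x^{n}$ is a nonunit for every $n\geq 1$ (a power of a nonunit is a nonunit in a commutative ring, since a unit factor of a unit is a unit). With that uniform factorization in hand the argument is case-free in $n$, and the unit case is handled separately using only that $Grad(P)$ is an ideal.
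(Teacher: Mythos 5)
Your proposal is correct and follows essentially the same route as the paper: reduce to nonunit homogeneous $x,y$, pass to a power $(xy)^n\in P$, and pad the factorization so that it becomes a product of three nonunit homogeneous elements to which the graded $1$-absorbing primary hypothesis applies. The only (cosmetic) difference is the padding — the paper takes an even exponent $k=2s$ and factors $a^{k}b^{k}=a^{s}\cdot a^{s}\cdot b^{k}$, while you multiply by an extra $x$ and factor $x^{n+1}y^{n}=x\cdot x^{n}\cdot y^{n}$ — and your write-up is in fact slightly more careful, since you justify the reduction to nonunits and the properness of $Grad(P)$ explicitly.
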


\begin{proof}Let $a, b\in h(R)$ such that $ab\in Grad(P)$. We may assume that $a, b$ are nonunit elements of $R$. Let $k\geq2$ be an even positive integer such that $(ab)^{k}\in P$. Then $k = 2s$ for some positive integer $s\geq1$. Since $(ab)^{k} = a^{k}b^{k} = a^{s}a^{s}b^{k}\in P$ and $P$ is a graded $1$-absorbing primary ideal of $R$, we conclude that $a^{s}a^{s} = a^{k}\in P$ or $b^{k}\in P$. Hence, $a\in Grad(P)$ or $b\in Grad(P)$. Thus $Grad(P)$ is a graded prime ideal of $R$.
\end{proof}

\begin{thm}\label{Theorem 2.6} Let $R$ be a graded ring. Then there exists a graded strongly $1$-absorbing primary ideal of $R$ if and only if
\begin{enumerate}
\item $Grad(\{0\})$ is graded prime, or
\item $R$ is graded local.
\end{enumerate}
\end{thm}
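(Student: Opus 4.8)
The plan is to reduce everything to the structural characterization in Theorem \ref{Theorem 2.2} together with Lemma \ref{Theorem 2}, and to exhibit an explicit witness ideal in each case of the backward implication. The statement is a biconditional, so I would treat the two directions separately, using the two earlier results as the engine.

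For the forward implication, I would start by assuming a graded strongly $1$-absorbing primary ideal $P$ exists and immediately invoke Theorem \ref{Theorem 2.2}, which splits $P$ into exactly two alternatives. If $P$ falls under the second alternative, then $R$ is already graded local, which is conclusion (2), and nothing further is needed. If $P$ falls under the first alternative, then $P$ is graded $1$-absorbing primary with $Grad(P)=Grad(\{0\})$; applying Lemma \ref{Theorem 2} shows that $Grad(P)$ is graded prime, and rewriting via $Grad(P)=Grad(\{0\})$ gives that $Grad(\{0\})$ is graded prime, which is conclusion (1). Thus the two cases of Theorem \ref{Theorem 2.2} map precisely onto the two conclusions of the statement.

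For the backward implication, I would construct a witness in each case. If $Grad(\{0\})$ is graded prime, set $P=Grad(\{0\})$; this is a proper graded prime ideal satisfying $P=Grad(\{0\})$, so Corollary \ref{Corollary 2.4}(1) certifies that $P$ is graded strongly $1$-absorbing primary. If instead $R$ is graded local with graded maximal ideal $M$, set $P=M$; since $M$ is graded prime and $R$ is graded local with graded maximal ideal $M$, Corollary \ref{Corollary 2.4}(2) certifies that $M$ is graded strongly $1$-absorbing primary. In both cases a graded strongly $1$-absorbing primary ideal exists.

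The argument is essentially bookkeeping on top of the earlier characterizations, so I do not expect a genuinely hard analytic step; the points requiring care are checking that the witness ideals are proper (guaranteed since graded prime and graded maximal ideals are proper by definition) and that $Grad(Grad(\{0\}))=Grad(\{0\})$, so that the chosen $P$ really reproduces the radical of the zero ideal. I expect the only real subtlety to be organizing the case split so that the two alternatives of Theorem \ref{Theorem 2.2} align cleanly with the two alternatives in the statement, rather than any computation.
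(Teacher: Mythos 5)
Your proposal is correct and follows essentially the same route as the paper: the forward direction reduces to the dichotomy of Theorem \ref{Theorem 2.2} combined with Lemma \ref{Theorem 2}, and the backward direction exhibits the same witnesses ($Grad(\{0\})$ or the graded maximal ideal) via Corollary \ref{Corollary 2.4}. The only cosmetic difference is that the paper phrases the forward direction as ``if $R$ is not graded local, then $Grad(P)=Grad(\{0\})$'' rather than as an explicit case split on the two alternatives of Theorem \ref{Theorem 2.2}, which is logically equivalent.
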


\begin{proof} Suppose that $R$ contains a graded strongly $1$-absorbing primary ideal $P$. If $R$ is not graded local, then by Theorem \ref{Theorem 2.2}, $Grad(P) =Grad(\{0\})$. Now, by Lemma \ref{Theorem 2}, $Grad(P)$ is graded prime since $P$ is graded $1$-absorbing primary. Thus, $Grad(\{0\})$ is graded prime. Conversely, by Corollary \ref{Corollary 2.4}, if $(R, X)$ is graded local, then $X$ is a graded strongly $1$-absorbing primary ideal, and if $P =Grad(\{0\})$ is graded prime, then it is a graded strongly $1$-absorbing primary ideal.
\end{proof}

A graded ring $R$ is said to be graded Artinian if every descending chain of graded ideals of $R$ terminates.

\begin{cor}\label{Corollary 2.7} Let $n\geq 2$ be an integer and the ring $\mathbb{Z}/n\mathbb{Z}$ be $G$-graded by a group $G$. Then $\mathbb{Z}/n\mathbb{Z}$ has a graded strongly $1$-absorbing primary ideal if and only if $n = p^{m}$ for some prime $p$ and positive integer $m$.
\end{cor}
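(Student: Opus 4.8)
The plan is to first reduce this graded question to an ungraded one by observing that the only $G$-grading the ring $\mathbb{Z}/n\mathbb{Z}$ can carry is the trivial one. Indeed, $1\in R_{e}$, and every element of $\mathbb{Z}/n\mathbb{Z}$ is an integer multiple of $1$; since $R_{e}$ is an additive subgroup containing $1$, it must contain the whole cyclic group $\mathbb{Z}/n\mathbb{Z}$, forcing $R_{e}=\mathbb{Z}/n\mathbb{Z}$ and $R_{g}=\{0\}$ for every $g\neq e$. Consequently $h(R)=\mathbb{Z}/n\mathbb{Z}$, the graded ideals are precisely the ordinary ideals, and the notions of graded prime, graded radical, and graded local collapse to their ungraded counterparts.

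With this reduction in hand, I would apply Theorem \ref{Theorem 2.6}: the ring $\mathbb{Z}/n\mathbb{Z}$ admits a graded strongly $1$-absorbing primary ideal if and only if either $Grad(\{0\})$ is (graded) prime or $\mathbb{Z}/n\mathbb{Z}$ is (graded) local. It then remains to show that each of these two conditions is, in $\mathbb{Z}/n\mathbb{Z}$, equivalent to $n$ being a prime power. Writing $n=p_{1}^{a_{1}}\cdots p_{k}^{a_{k}}$ for the prime factorization, I would invoke the standard facts that the maximal ideals of $\mathbb{Z}/n\mathbb{Z}$ are exactly the $(p_{i})/n\mathbb{Z}$ and that $Grad(\{0\})=\sqrt{0}=(p_{1}\cdots p_{k})/n\mathbb{Z}$.

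For the local alternative, $\mathbb{Z}/n\mathbb{Z}$ has a unique maximal ideal precisely when there is a single prime divisor, i.e. $k=1$, which is the same as $n=p^{m}$. For the nilradical alternative, the Chinese Remainder Theorem gives $(\mathbb{Z}/n\mathbb{Z})/\sqrt{0}\cong \mathbb{Z}/(p_{1}\cdots p_{k})\mathbb{Z}\cong \prod_{i=1}^{k}\mathbb{F}_{p_{i}}$, and this quotient is an integral domain (equivalently, $\sqrt{0}$ is prime) if and only if $k=1$, again giving $n=p^{m}$. Thus both alternatives of Theorem \ref{Theorem 2.6} hold exactly when $n=p^{m}$, and combining them yields the stated equivalence.

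Since the substantive content is the grading-triviality observation, I expect the only genuine subtlety to lie in that first step; once the grading is seen to be trivial, everything else is routine commutative algebra together with elementary number theory. I would also take care to remark that the conclusion is independent of the choice of group $G$, precisely because no nontrivial grading on $\mathbb{Z}/n\mathbb{Z}$ exists.
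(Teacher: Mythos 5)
Your proof is correct, and its skeleton --- invoke Theorem \ref{Theorem 2.6} and then show that each of the two alternatives reduces, in $\mathbb{Z}/n\mathbb{Z}$, to $n$ being a prime power --- is the same as the paper's. The genuine difference lies in how you pass between the graded and ungraded worlds. You first prove that $\mathbb{Z}/n\mathbb{Z}$ admits only the trivial $G$-grading (since $R_{e}$ is an additive subgroup containing $1$, and $1$ generates the whole additive group), so that graded ideals, graded primes, the graded radical and graded localness coincide with their classical counterparts; after that everything is elementary number theory. The paper instead stays in the graded language: it notes that $\mathbb{Z}/n\mathbb{Z}$ is graded Artinian and asserts that a graded Artinian ring is graded local if and only if it is a graded field, concluding that the local alternative forces $n$ to be prime. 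Your route is preferable on this point: the paper's intermediate claim is not right as stated ($\mathbb{Z}/4\mathbb{Z}$ is Artinian and local but not a field), whereas your statement that $\mathbb{Z}/n\mathbb{Z}$ is local exactly when $n$ has a single prime divisor is the correct one. Both arguments nevertheless land on the same final equivalence, because the nilradical alternative already accounts for every prime power. Your trivial-grading observation also makes transparent why the conclusion is independent of the choice of $G$, which the paper leaves implicit.
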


\begin{proof} By Theorem \ref{Theorem 2.6}, $\mathbb{Z}/n\mathbb{Z}$ has a graded strongly $1$-absorbing primary ideal if and only if it is graded local or $Grad(\{0\})$ is graded prime. On the other hand, $\mathbb{Z}/n\mathbb{Z}$ is Artinian and so it is graded Artinian, and then $\mathbb{Z}/n\mathbb{Z}$ is graded local if and only it is graded field, that is $n$ is prime number. Now, if $n = p_{1}^{k_{1}}...p_{r}^{k_{r}}$ (the primary decomposition of $n$), then $Grad(\{0\}) = p_{1}... p_{r}\mathbb{Z}/n\mathbb{Z}$.
Hence, $Grad(\{0\})$ is graded prime if and only if $r = 1$.
\end{proof}

Let $R$ and $S$ be two $G$-graded rings. Then $R\times S$ is $G$-graded ring by $(R\times S)_{g}=R_{g}\times S_{g}$ for all $g\in G$.

\begin{cor}\label{Corollary 2.8} Let $R$ and $S$ be two $G$-graded rings. Then $R\times S$ has no graded strongly $1$-absorbing primary ideal.
\end{cor}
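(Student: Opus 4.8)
The plan is to use Theorem~\ref{Theorem 2.6} as the engine driving the whole argument. The key observation is that $R \times S$ has a graded strongly $1$-absorbing primary ideal if and only if $R \times S$ is graded local or $Grad(\{0\})$ is graded prime in $R \times S$. So I would show that neither of these two conditions can hold for a product ring, forcing the conclusion that no such ideal exists.

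**First I would** dispose of the graded local case. A product ring $R \times S$ carries the two obvious graded ideals $R \times \{0\}$ and $\{0\} \times S$, and the corresponding quotient structure makes it clear that $R \times S$ has (at least) two distinct graded maximal ideals: any graded maximal ideal of the form $X \times S$ (with $X$ graded maximal in $R$) and any of the form $R \times Y$ (with $Y$ graded maximal in $S$). Since $R$ and $S$ each possess a nonzero unity, each has at least one graded maximal ideal, so we obtain two distinct graded maximal ideals in the product. Hence $R \times S$ is never graded local.

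**Next I would** rule out that $Grad(\{0\})$ is graded prime in $R \times S$. The radical of the zero ideal in a product splits as $Grad(\{0\}_{R\times S}) = Grad(\{0\}_R) \times Grad(\{0\}_S)$, so it is a proper graded ideal that contains both $\{0\}\times S$-torsion and $R\times\{0\}$-torsion behavior in a way that destroys primeness. Concretely, take the homogeneous idempotents $a = (1,0)$ and $b = (0,1)$ in $(R\times S)_e$; their product $ab = (0,0) \in Grad(\{0\})$, yet neither $a$ nor $b$ lies in $Grad(\{0\})$ (since $a^n = (1,0) \neq 0$ and $b^n = (0,1) \neq 0$ for all $n$). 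This witnesses that $Grad(\{0\})$ fails the graded prime condition. With both alternatives of Theorem~\ref{Theorem 2.6} excluded, the theorem forces $R \times S$ to have no graded strongly $1$-absorbing primary ideal.

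**The main obstacle** I anticipate is making the graded local claim fully rigorous: I must verify that the ideals $X \times S$ and $R \times Y$ are genuinely graded (which follows from $(R\times S)_g = R_g \times S_g$ and Proposition~\ref{1}) and genuinely maximal, and confirm they are distinct. The argument that $Grad(\{0\})$ is not graded prime is cleaner, resting only on the split idempotents $(1,0)$ and $(0,1)$, both homogeneous of degree $e$. Once these two structural facts are in place, the corollary follows immediately by citing Theorem~\ref{Theorem 2.6}.
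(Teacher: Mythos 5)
Your proposal is correct and follows essentially the same route as the paper: the paper's proof is a one-line appeal to Theorem~\ref{Theorem 2.6}, asserting that $R\times S$ is never graded local and that $Grad(\{0_{R\times S}\})=Grad(\{0_{R}\})\times Grad(\{0_{S}\})$ is never graded prime. You simply supply the witnesses the paper leaves implicit --- the two comaximal graded maximal ideals $X\times S$ and $R\times Y$, and the homogeneous idempotents $(1,0)$, $(0,1)$ of degree $e$ --- and both verifications are sound.
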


\begin{proof} The result holds from the fact that $R\times S$ is not graded local and $Grad(\{0_{R\times S}\})=Grad(\{0_{R}\})\times Grad(\{0_{S}\})$ is never graded prime ideal in $R\times S$.
\end{proof}

\begin{prop}\label{Proposition 2.9} Let $P$ be a proper graded ideal of $R$. Then $P$ is a graded strongly $1$-absorbing primary ideal of $R$ if and only if whenever $IJK\subseteq P$ for some proper graded ideals $I, J$ and $K$ of $R$, then $IJ\subseteq P$ or $K\subseteq Grad(\{0\})$.
\end{prop}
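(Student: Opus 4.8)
The plan is to prove this as a biconditional, with the ``if'' direction being routine and the ``only if'' direction requiring an element-extraction argument. For the ``if'' direction, I would assume the ideal-theoretic condition holds and take homogeneous elements $x,y,z\in h(R)$ with $xyz\in P$. The key observation is that by Proposition \ref{1}(2), each of $Rx$, $Ry$, $Rz$ is a graded ideal of $R$ (since $x,y,z$ are homogeneous). I would then consider the product $(Rx)(Ry)(Rz)\subseteq P$. If all three of these principal graded ideals are proper, the hypothesis applies directly to give $(Rx)(Ry)\subseteq P$ or $Rz\subseteq Grad(\{0\})$, whence $xy\in P$ or $z\in Grad(\{0\})$. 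The obstruction here is the word ``proper'': if $x$ (say) is a unit, then $Rx=R$ is not proper and the hypothesis does not apply. However, this is exactly the case the definition of graded strongly $1$-absorbing primary excludes --- the definition only constrains nonunit $x,y,z$ --- so when any of $x,y,z$ is a unit the conclusion of the definition holds vacuously, and I would dispose of these cases separately before invoking the hypothesis.

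For the ``only if'' direction, I would assume $P$ is graded strongly $1$-absorbing primary and take proper graded ideals $I,J,K$ with $IJK\subseteq P$. The strategy is to argue by contradiction: suppose $IJ\not\subseteq P$ and $K\not\subseteq Grad(\{0\})$, and produce a violation of the elementwise definition. Since $IJ\not\subseteq P$ and $IJ$ is a graded ideal (by Proposition \ref{1}(1), as $I,J$ are graded), there is a homogeneous element of $IJ$ not in $P$; I would want to realize this as a product $xy$ with $x\in I$, $y\in J$ homogeneous. Similarly, since $K\not\subseteq Grad(\{0\})$ and $K$ is graded, there is a homogeneous $z\in K$ with $z\notin Grad(\{0\})$. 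Then $xyz\in IJK\subseteq P$, and applying the definition would force $xy\in P$ or $z\in Grad(\{0\})$, contradicting both choices.

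The main obstacle is extracting a single homogeneous \emph{product} $xy\in IJ\setminus P$ with homogeneous factors. An element of $IJ$ is a finite sum $\sum_i a_i b_i$ with $a_i\in I$, $b_i\in J$, and graded ideals being graded only guarantees that its \emph{homogeneous components} lie in $IJ$, not that those components are single products of homogeneous elements. To handle this, I would pass to homogeneous components: since $IJ\not\subseteq P$ and $P$ is graded, some homogeneous component of some element of $IJ$ lies outside $P$; because $I$ and $J$ are graded, such a homogeneous element of $IJ$ can be written as a sum of products $x_k y_k$ with $x_k\in I\cap h(R)$ and $y_k\in J\cap h(R)$ all of matching degree. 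If every such product $x_k y_k$ lay in $P$, their sum would lie in $P$, a contradiction; so some individual homogeneous product $xy\notin P$ with $x\in I$, $y\in J$ homogeneous. A parallel (and simpler) argument gives the homogeneous $z\in K\setminus Grad(\{0\})$, using that $Grad(\{0\})$ is graded. One final subtlety is the \emph{nonunit} requirement in the definition: since $I,J,K$ are proper graded ideals, every element of each is a nonunit, so the extracted $x,y,z$ are automatically nonunit and the definition applies cleanly.
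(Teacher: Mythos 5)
Your proof is correct and follows essentially the same route as the paper's: the converse direction uses the principal graded ideals $Rx$, $Ry$, $Rz$ exactly as the paper does, and the forward direction is the paper's homogeneous-component extraction recast as a proof by contradiction (the paper instead fixes $a\in I$, $b\in J$ with $ab\notin P$, decomposes to get $a_{g}b_{h}\notin P$, and then shows every component of every element of $K$ lands in $Grad(\{0\})$). The only cosmetic difference is how the homogeneous product outside $P$ is located --- via components of a single product $ab$ versus components of a general element of $IJ$ written as a sum of homogeneous products --- and both are valid.
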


\begin{proof} Suppose that $P$ is a graded strongly $1$-absorbing primary ideal of $R$. Assume that $I, J$ and $K$ be proper graded ideals of $R$ such that $IJK\subseteq P$ and $IJ\nsubseteq P$. Then there exist $a\in I$ and $b\in J$ such that $ab\notin P$. Since $a\in I$ and $I$ is a graded ideal, $a_{g}\in I$ for all $g\in G$. Similarly, $b_{g}\in J$ for all $g\in G$. Since $ab\notin P$, there exist $g, h\in G$ such that $a_{g}b_{h}\notin P$. Let $x\in K$. Then $x_{r}\in K$ for all $r\in G$, and then $a_{g}b_{h}x_{r}\in P$ for all $r\in G$. Since $P$ is graded strongly $1$-absorbing primary, $x_{r}\in Grad(\{0\})$ for all $r\in G$, and then $x\in Grad(\{0\})$. Hence, $K\subseteq Grad(\{0\})$. Conversely, let $x, y, z\in h(R)$ be nonunit elements such that $xyz\in P$. Then $I=Rx$, $J=Ry$ and $K=Rz$ are proper graded ideals of $R$ with $IJK\subseteq P$, and then $IJ\subseteq P$ or $K\subseteq Grad(\{0\})$, which implies that $xy\in P$ or $z\in Grad(\{0\})$. Hence, $P$ is a graded strongly $1$-absorbing primary ideal of $R$.
\end{proof}

\begin{prop}\label{Proposition 2.10} Let $R$ be a graded ring, $P$ and $K$ be two proper graded ideals of $R$. If $P$ and $K$ are graded strongly $1$-absorbing primary, then so is $P\bigcap K$.
\end{prop}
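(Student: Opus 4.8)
The plan is to argue directly from the definition, exploiting the one feature that separates the graded strongly $1$-absorbing condition from the ordinary graded $2$-absorbing variants: the alternative clause $z \in Grad(\{0\})$ refers to the graded radical of the \emph{zero} ideal, a fixed object that does not depend on the particular ideal $P$ or $K$. This uniformity is exactly what makes the intersection behave well.

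First I would record that $P \bigcap K$ is again a proper graded ideal: it is graded by Proposition \ref{1}(1), and it is proper since $P \bigcap K \subseteq P \subsetneq R$. Next, take arbitrary $x, y, z \in h(R)$ with $xyz \in P \bigcap K$; the goal is to show $xy \in P \bigcap K$ or $z \in Grad(\{0\})$. Since $xyz \in P$ and $P$ is graded strongly $1$-absorbing primary, either $xy \in P$ or $z \in Grad(\{0\})$. Likewise, since $xyz \in K$ and $K$ is graded strongly $1$-absorbing primary, either $xy \in K$ or $z \in Grad(\{0\})$.

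I would then split on whether $z \in Grad(\{0\})$. If it is, the conclusion holds immediately. If it is not, then the first alternative must hold in both dichotomies, yielding $xy \in P$ and $xy \in K$, hence $xy \in P \bigcap K$. This closes the argument.

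There is essentially no obstacle here; the only point worth emphasizing is that, unlike the graded $2$-absorbing primary situation (where the surviving options involve $Grad(I)$ and therefore change with the ideal $I$), the escape condition $z \in Grad(\{0\})$ is the same for both $P$ and $K$, so the two applications of the definition combine with no interaction between them. One could alternatively deduce the statement from the characterization in Theorem \ref{Theorem 2.2}, but the direct verification is shorter and cleaner.
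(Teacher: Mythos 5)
Your proof is correct and follows essentially the same route as the paper's: both verify that $P\bigcap K$ is a proper graded ideal via Proposition \ref{1} and then split on whether $z\in Grad(\{0\})$, using that this escape clause is independent of the ideal so that the two applications of the definition combine to give $xy\in P\bigcap K$. No substantive difference.
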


\begin{proof} By Proposition \ref{1}, $P\bigcap K$ is a graded ideal of $R$. Let $x, y, z\in h(R)$ be nonunit elements such that $xyz\in P\bigcap K$. Then $xyz\in P$ and $xyz\in K$. If $z\in Grad(\{0\})$, then it is done. Suppose that $z\notin Grad(\{0\})$. Since $xyz\in P$ and $P$ is graded strongly $1$-absorbing primary, $xy\in P$. Similarly, $xy\in K$. Hence, $xy\in P\bigcap K$, and thus $P\bigcap K$ is a graded strongly $1$-absorbing primary ideal of $R$.
\end{proof}

\begin{prop}\label{Proposition 2.11(1)} Let $R$ be a graded ring such that every element of $h(R)$ is either nilpotent or unit. Then $Ra$ is a graded strongly $1$-absorbing primary ideal of $R$ for all nonunit $a\in h(R)$.
\end{prop}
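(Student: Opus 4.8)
The plan is to verify Definition \ref{Example 2.1}'s defining condition directly, exploiting the sharp dichotomy that the hypothesis imposes on homogeneous elements: each element of $h(R)$ is either a unit or nilpotent, and a homogeneous element is nilpotent precisely when it lies in $Grad(\{0\})$. First I would dispose of the preliminaries. Since $a\in h(R)$, Proposition \ref{1}(2) shows that $Ra$ is a graded ideal of $R$. It is proper because $a$ is a nonunit: by hypothesis $a$ is then nilpotent, so $a\in Grad(\{0\})$, and since $Grad(\{0\})$ is a proper ideal (as $1\notin Grad(\{0\})$) containing $a$, we get $Ra\subseteq Grad(\{0\})\subsetneq R$. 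Thus $P=Ra$ is a legitimate candidate.

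The core of the argument is a case split on the status of $z$. I would take homogeneous $x,y,z\in h(R)$ with $xyz\in Ra$ and argue as follows. If $z$ is a unit, then multiplying $xyz\in Ra$ by $z^{-1}\in R$ and using that $Ra$ is an ideal yields $xy=(xyz)z^{-1}\in Ra$, giving the first alternative of the definition. If instead $z$ is a nonunit, then by hypothesis $z$ is nilpotent, so $z^{n}=0$ for some $n$, and since $z$ is homogeneous this means exactly that $z\in Grad(\{0\})$, giving the second alternative. As these two cases are exhaustive for a homogeneous element $z$, the defining condition holds and $Ra$ is graded strongly $1$-absorbing primary.

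Since the verification is essentially immediate, there is no substantial obstacle here; the only point requiring attention is that Definition \ref{Example 2.1} quantifies over all homogeneous $z$, including units, so one must not overlook the unit case, where the conclusion $xy\in P$ is produced \emph{for free} by $P$ being an ideal rather than by the ring hypothesis. As an alternative, more conceptual route one could first observe that the nilpotent-or-unit hypothesis forces $R$ to be graded local with graded maximal ideal $Grad(\{0\})$ (every graded maximal ideal consists of homogeneous nonunits, hence of nilpotents, hence is contained in $Grad(\{0\})$), and then note that $a$ nilpotent gives $Grad(Ra)=Grad(\{0\})$, so that the conclusion follows from Theorem \ref{Theorem 2.2}(1); however, the direct verification above is more economical and I would present it as the main proof.
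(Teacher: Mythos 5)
Your proof is correct and follows essentially the same route as the paper's: the entire content is the observation that a nonunit homogeneous $z$ must be nilpotent, hence lies in $Grad(\{0\})$, so the second alternative of the definition always fires. The only difference is that you additionally treat the case of a unit $z$ (which the intended definition excludes, since it quantifies over nonunit homogeneous elements) and sketch an alternative via Theorem \ref{Theorem 2.2}; both additions are harmless but not needed.
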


\begin{proof} Let $a\in h(R)$ be nonunit element. By Proposition \ref{1}, $Ra$ is a graded ideal of $R$, and as $a$ is nonunit, $Ra$ is proper. Assume that $x, y, z\in h(R)$ be nonunit elements such that $xyz\in Ra$. Then $z\in Grad(\{0\})$, and hence $Ra$ is a graded strongly $1$-absorbing primary ideal of $R$.
\end{proof}

\begin{cor}\label{Proposition 2.11(2)} Let $R$ be a graded ring such that every element of $h(R)$ is either nilpotent or unit. Then every proper graded ideal of $R$ is graded strongly $1$-absorbing primary.
\end{cor}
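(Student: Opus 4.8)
The plan is to reduce the statement to the single observation that already drives the proof of Proposition~\ref{Proposition 2.11(1)}: under the standing hypothesis, every nonunit homogeneous element of $R$ automatically lies in $Grad(\{0\})$. Once this is recorded, verifying the defining condition for an arbitrary proper graded ideal becomes immediate, because the second alternative in the definition is always available.

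Concretely, I would begin by recalling that a homogeneous element $z\in h(R)$ belongs to $Grad(\{0\})$ precisely when $z$ is nilpotent; this is just the definition of the graded radical of $\{0\}$ specialized to a homogeneous element (whose single nonzero component is $z$ itself). By hypothesis every element of $h(R)$ is either nilpotent or a unit, so any nonunit $z\in h(R)$ must be nilpotent, and therefore $z\in Grad(\{0\})$.

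Next, let $P$ be an arbitrary proper graded ideal of $R$, and let $x,y,z\in h(R)$ be nonunit elements with $xyz\in P$. Applying the previous observation to $z$ gives $z\in Grad(\{0\})$, so the alternative ``$xy\in P$ or $z\in Grad(\{0\})$'' is satisfied through its second disjunct. Since $x,y,z$ were arbitrary nonunit homogeneous elements, $P$ meets the definition of a graded strongly $1$-absorbing primary ideal of $R$, which is exactly the claim.

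I expect no genuine obstacle here: the argument is essentially the proof of Proposition~\ref{Proposition 2.11(1)} with the principal ideal $Ra$ replaced by an arbitrary proper graded ideal $P$, since that proof never used any special feature of $Ra$ beyond its being a proper graded ideal. The only point deserving a moment's care is making explicit that the relevant instance of the definition concerns nonunit homogeneous elements, so that the hypothesis genuinely forces $z$ to be nilpotent and hence lands $z$ in $Grad(\{0\})$.
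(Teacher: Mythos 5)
Your proof is correct. The one point that needs checking is your claim that a homogeneous element lies in $Grad(\{0\})$ exactly when it is nilpotent, and this does follow from the paper's definition of the graded radical, since a homogeneous $z$ has only one nonzero component, namely $z$ itself. With that in hand, the hypothesis forces every nonunit $z\in h(R)$ into $Grad(\{0\})$, and the defining condition for a graded strongly $1$-absorbing primary ideal holds vacuously through its second disjunct.

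Your route differs slightly from the paper's. The paper proves the corollary by reduction to Proposition~\ref{Proposition 2.11(1)}: it supposes $z\notin Grad(\{0\})$, notes that $xyz$ is a nonunit homogeneous element, applies Proposition~\ref{Proposition 2.11(1)} to the principal graded ideal $R(xyz)$, and concludes $xy\in R(xyz)\subseteq P$. You instead inline the key observation underlying Proposition~\ref{Proposition 2.11(1)} and apply it directly to $z$, which makes explicit something the paper's argument obscures: under the standing hypothesis the case $z\notin Grad(\{0\})$ never actually occurs, so the second alternative in the definition is always available and no property of $P$ beyond being a proper graded ideal is used. Your version is shorter and more transparent; the paper's version has the mild virtue of formally reusing the proposition rather than repeating its proof. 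Both are sound.
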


\begin{proof} Let $P$ be a proper graded ideal of $R$. Assume that $x, y, z\in h(R)$ be nonunit elements such that $xyz\in P$ and $z\notin Grad(\{0\})$. Then $xyz\in h(R)$ is nonunit element with $xyz\in R(xyz)$. By Proposition \ref{Proposition 2.11(1)}, $R(xyz)$ is graded strongly $1$-absorbing primary, and then $xy\in R(xyz)\subseteq P$. Hence, $P$ is a graded strongly $1$-absorbing primary ideal of $R$.
 \end{proof}

\begin{exa} Every element of $R=\mathbb{Z}/9\mathbb{Z}$ is either unit or nilpotent. So, if $R$ is $G$-graded by any group $G$, then every homogeneous element of $R$ is either unit or nilpotent, and then every proper graded ideal of $R$ is graded strongly $1$-absorbing primary by Corollary \ref{Proposition 2.11(2)}.
\end{exa}

\begin{prop}\label{Proposition 2.12} Let $R$ be a graded ring. Then every graded prime ideal of $R$ is graded strongly $1$-absorbing primary if and only if $R$ is graded local and has at most one graded prime ideal that is not graded maximal.
\end{prop}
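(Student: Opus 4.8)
The plan is to reduce everything to Corollary~\ref{Corollary 2.4}, which tells us precisely when a \emph{graded prime} ideal $P$ is graded strongly $1$-absorbing primary: this happens exactly when $P=Grad(\{0\})$, or $R$ is graded local with graded maximal ideal $P$. Thus the hypothesis ``every graded prime ideal of $R$ is graded strongly $1$-absorbing primary'' becomes ``every graded prime ideal $P$ satisfies $P=Grad(\{0\})$ or $P$ is the graded maximal ideal of a graded local $R$'', and the whole proposition turns into a bookkeeping argument about the graded prime spectrum. I will also use two standard facts from graded ring theory: every graded maximal ideal is graded prime, and $Grad(\{0\})$ equals the intersection of all graded prime ideals of $R$.

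For the forward implication, assume every graded prime ideal is graded strongly $1$-absorbing primary. First I would show $R$ is graded local. Since $R$ has nonzero unity, Zorn's lemma supplies at least one graded maximal ideal, which is graded prime. If $R$ had two distinct graded maximal ideals $M_{1}\neq M_{2}$, then each $M_{i}$ is graded prime, hence graded strongly $1$-absorbing primary; but $R$ is not graded local, so option~(2) of Corollary~\ref{Corollary 2.4} fails for both, forcing $M_{1}=Grad(\{0\})=M_{2}$, a contradiction. Hence $R$ is graded local, say with graded maximal ideal $M$. Because $M$ is then the unique graded maximal ideal, every proper graded ideal is contained in $M$, so the graded primes that are not graded maximal are exactly the graded primes $P\neq M$. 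For such a $P$, option~(2) of Corollary~\ref{Corollary 2.4} is impossible (as $P$ cannot be the unique graded maximal ideal $M$), so $P=Grad(\{0\})$. Thus all non-maximal graded primes coincide with $Grad(\{0\})$, and there is at most one of them.

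For the converse, assume $R$ is graded local with graded maximal ideal $M$ and has at most one graded prime ideal that is not graded maximal; let $P$ be any graded prime ideal, and I must verify one of the two conditions of Corollary~\ref{Corollary 2.4}. If $P=M$, then $R$ is graded local with graded maximal ideal $P$, so condition~(2) holds. If $P\neq M$, then since $M$ is the only graded maximal ideal, $P$ is not graded maximal, hence it is \emph{the} unique non-maximal graded prime, and the graded prime spectrum of $R$ is exactly $\{M,P\}$. Using that $Grad(\{0\})$ is the intersection of all graded prime ideals, which here equals $M\cap P=P$ (since $P\subsetneq M$), I conclude $P=Grad(\{0\})$, so condition~(1) holds; in either case $P$ is graded strongly $1$-absorbing primary. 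The main obstacle is precisely this last identification: converting ``at most one non-maximal graded prime'' into ``that prime equals $Grad(\{0\})$'' relies on the intersection description of the graded nilradical, so the one point to handle with care is justifying $Grad(\{0\})=P$ from the knowledge that the only graded primes are $M$ and $P$.
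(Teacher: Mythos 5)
Your proof is correct and follows essentially the same route as the paper: both directions reduce to Corollary~\ref{Corollary 2.4} together with the fact that $Grad(\{0\})$ is the intersection of the graded prime ideals. The only differences are organizational (you derive graded locality by contradicting two distinct graded maximal ideals rather than by showing all graded primes collapse to $Grad(\{0\})$, and you treat each prime in the converse individually instead of the paper's case split on whether every homogeneous element is unit or nilpotent), but the substance is the same.
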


\begin{proof} Suppose that every graded prime ideal of $R$ is graded strongly $1$-absorbing primary. Assume that $R$ is not graded local and let $P$ be a graded prime ideal of $R$. Then since $P$ is graded strongly $1$-absorbing primary, we get $Grad(\{0\}) = P$ by Corollary \ref{Corollary 2.4}. Hence, $Grad(\{0\})$ is the only graded prime ideal of $R$. Thus, $R$ is graded local, which is a contradiction. Hence, $R$ is graded local with graded maximal ideal $X$. Now, let $P$ be a graded prime ideal which is not graded maximal. Then $Grad(P)=Grad(\{0\})$. Hence, $R$ has at most two graded prime ideals $Grad(\{0\})$ and $X$. Conversely, if every homogeneous element of $R$ is either unit or nilpotent, then it is done by Corollary \ref{Proposition 2.11(2)}. Otherwise, $R$ has exactly two graded prime ideals $P\subsetneq X$. Since $Grad(\{0\})$ is the intersection of graded prime ideals, we get that $Grad(\{0\}) = P$. It is clear that both of
$Grad(\{0\})$ and $X$ are graded strongly $1$-absorbing primary ideals, as desired.
\end{proof}

\begin{prop}\label{Proposition 2.14} Let $R$ be a graded ring. Then every graded primary ideal of $R$ is graded strongly $1$-absorbing primary if and only if \begin{enumerate}
\item every element of $h(R)$ is either nilpotent or unit, or
\item $R$ is graded local with graded maximal ideal $X$, one graded prime ideal that is not graded maximal (which is $Grad(\{0\})$, and every graded $X$-primary ideal contains $X^{2}$.
\end{enumerate}
\end{prop}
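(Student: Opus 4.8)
The plan is to reduce everything to the characterisation in Theorem \ref{Theorem 2.2}, together with Proposition \ref{Proposition 2.12} and Corollary \ref{Proposition 2.11(2)}, using the standard fact (already invoked in the proof of Proposition \ref{Proposition 2.12}) that $Grad(\{0\})$ is the intersection of all graded prime ideals of $R$, and that in a graded local ring the graded maximal ideal consists exactly of the nonunit homogeneous elements.

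For the forward direction, I would first note that every graded prime ideal is graded primary (if $xy\in P$ with $P$ graded prime, then $x\in P$ or $y\in P=Grad(P)$), so the hypothesis forces every graded prime ideal of $R$ to be graded strongly $1$-absorbing primary. Proposition \ref{Proposition 2.12} then yields that $R$ is graded local, say with graded maximal ideal $X$, and that $R$ has at most one graded prime ideal that is not graded maximal. Since $X$ is itself a graded prime ideal, exactly two cases remain. If $X$ is the only graded prime ideal, then $Grad(\{0\})=X$; as $R$ is graded local, every nonunit homogeneous element lies in $X=Grad(\{0\})$ and is therefore nilpotent, which is precisely condition (1). If instead there is a second graded prime $P\subsetneq X$, then $Grad(\{0\})=X\bigcap P=P$, so $Grad(\{0\})$ is the unique non-maximal graded prime; to obtain condition (2) it then remains to verify that every graded $X$-primary ideal $Q$ contains $X^{2}$, and this follows because such a $Q$ is graded primary, hence graded strongly $1$-absorbing primary by hypothesis, and since $Grad(Q)=X\neq P=Grad(\{0\})$, Theorem \ref{Theorem 2.2} forces its alternative (2), giving $X^{2}\subseteq Q$.

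For the converse, if (1) holds then Corollary \ref{Proposition 2.11(2)} immediately shows that every proper graded ideal, in particular every graded primary ideal, is graded strongly $1$-absorbing primary. If (2) holds, then the only graded prime ideals of $R$ are $Grad(\{0\})$ and $X$, so for any graded primary ideal $Q$ the radical $Grad(Q)$ is one of these two. When $Grad(Q)=Grad(\{0\})$, the ideal $Q$ is graded $1$-absorbing primary with $Grad(Q)=Grad(\{0\})$ and Theorem \ref{Theorem 2.2}(1) applies; when $Grad(Q)=X$, the ideal $Q$ is graded $X$-primary, so $X^{2}\subseteq Q$ by hypothesis and Theorem \ref{Theorem 2.2}(2) applies. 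In either case $Q$ is graded strongly $1$-absorbing primary, completing the argument. The main obstacle is the bookkeeping in the forward direction, namely splitting cleanly on the number of graded prime ideals and correctly identifying $Grad(\{0\})$ with the appropriate prime in each case; the rest is a direct translation through Theorem \ref{Theorem 2.2}.
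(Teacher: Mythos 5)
Your proposal is correct and follows essentially the same route as the paper: Proposition \ref{Proposition 2.12} handles the forward direction, Theorem \ref{Theorem 2.2} supplies the $X^{2}\subseteq Q$ condition, and the converse splits on whether $Grad(Q)$ equals $Grad(\{0\})$ or $X$. Your write-up is in fact slightly more careful than the paper's, since you explicitly justify the passage from ``(1) fails'' to ``exactly two graded prime ideals'' by splitting on the number of graded primes and identifying $Grad(\{0\})$ in each case.
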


\begin{proof}Suppose that every graded primary ideal of $R$ is graded strongly $1$-absorbing primary. If (1) does not hold, then by Proposition \ref{Proposition 2.12}, $R$ is graded local with exactly tow graded prime ideals which are $Grad(\{0\})$ and $X$ (the graded maximal ideal). Now, let $P$ be a graded $X$-primary ideal of $R$. Then since $Grad(\{0\})\neq Grad(P) = X$, we have necessarily $X^{2}\subseteq P$ (by Theorem \ref{Theorem 2.2}). Conversely, if (1) holds, then the result follows trivially. Hence, suppose that (2) holds. Let $P$ be a graded primary ideal of $R$. Then $Grad(P)=Grad(\{0\})$ or $Grad(P)=X$. If $Grad(P)=Grad(\{0\})$, then by Theorem \ref{Theorem 2.2}, $P$ is graded strongly $1$-absorbing primary since every graded primary is graded $1$-absorbing primary. Now, if $Grad(P)=X$, then also $P$ is graded strongly $1$-absorbing primary since $X^{2}\subseteq P$.
\end{proof}

\begin{prop}\label{Proposition 2.17} Let $R$ be a graded ring. Then $\{0\}$ is the only graded strongly $1$-absorbing primary ideal of $R$ if and only if
\begin{enumerate}
\item $R$ is a graded field, or
\item $R$ is a graded domain that is not graded local.
\end{enumerate}
\end{prop}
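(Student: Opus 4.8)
The plan is to handle both implications by reducing to two canonical graded prime ideals---the graded maximal ideal $X$ when $R$ is graded local, and $Grad(\{0\})$ when it is not---and to exploit that the hypothesized uniqueness of $\{0\}$ forces whichever of these ideals is relevant to collapse to $\{0\}$.

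For the forward implication I would assume $\{0\}$ is the only graded strongly $1$-absorbing primary ideal and split on whether $R$ is graded local. If $R$ is graded local with graded maximal ideal $X$, then $X$ is graded prime, so Corollary \ref{Corollary 2.4}(2) shows $X$ is itself graded strongly $1$-absorbing primary; uniqueness forces $X=\{0\}$. If $R$ is not graded local, then since $\{0\}$ is a graded strongly $1$-absorbing primary ideal, Theorem \ref{Theorem 2.6} gives that $Grad(\{0\})$ is graded prime, so Corollary \ref{Corollary 2.4}(1) shows $Grad(\{0\})$ is graded strongly $1$-absorbing primary, and uniqueness forces $Grad(\{0\})=\{0\}$.

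Next I would convert these two conclusions into conditions (1) and (2). In the local case, from $X=\{0\}$ I would argue that every nonzero homogeneous element is a unit: a nonzero nonunit $a\in h(R)$ would give a proper graded ideal $Ra$ (Proposition \ref{1}), necessarily contained in the unique graded maximal ideal $X=\{0\}$, forcing the contradiction $a=0$; hence $R$ is a graded field, which is (1). In the non-local case, $Grad(\{0\})=\{0\}$ being graded prime says precisely that $\{0\}$ is a graded prime ideal, i.e. $R$ has no homogeneous zero divisors, so $R$ is a graded domain that is not graded local, which is (2).

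For the converse I would check that each of (1) and (2) forces $\{0\}$ to be the unique graded strongly $1$-absorbing primary ideal. If $R$ is a graded field, its only proper graded ideal is $\{0\}$ and $R$ is graded local with graded maximal ideal $\{0\}$, so Corollary \ref{Corollary 2.4} applies directly. If $R$ is a graded domain that is not graded local, then $Grad(\{0\})=\{0\}$ is graded prime and hence graded strongly $1$-absorbing primary by Corollary \ref{Corollary 2.4}; moreover, for any graded strongly $1$-absorbing primary ideal $P$, Theorem \ref{Theorem 2.2} leaves only its first alternative, since the second requires $R$ to be graded local, whence $Grad(P)=Grad(\{0\})=\{0\}$ and therefore $P\subseteq Grad(P)=\{0\}$, giving $P=\{0\}$. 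I expect the genuine obstacle to be the step that upgrades $X=\{0\}$ to ``$R$ is a graded field'': this relies on the graded analogue of the fact that every proper graded ideal is contained in a graded maximal ideal, applied to $Ra$. The remaining steps are essentially bookkeeping with Theorems \ref{Theorem 2.2} and \ref{Theorem 2.6} and Corollary \ref{Corollary 2.4}.
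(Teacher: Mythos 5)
Your proposal is correct and follows essentially the same route as the paper: split on whether $R$ is graded local, use that $X$ (resp.\ $Grad(\{0\})$) is graded strongly $1$-absorbing primary to force it to equal $\{0\}$, and check the converse via Theorem \ref{Theorem 2.2} and Corollary \ref{Corollary 2.4}. The only difference is that you spell out the details the paper compresses (the passage from $X=\{0\}$ to ``graded field'' and the converse, which the paper dismisses as obvious), and those details are right.
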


\begin{proof} Suppose that $\{0\}$ is the only graded strongly $1$-absorbing primary ideal of $R$. If $R$ is graded local with graded maximal ideal $X$, then $X = \{0\}$ since $X$ is a graded strongly $1$-absorbing primary ideal of $R$. Hence, $R$ is a graded field. Now, if $R$ is not graded local, then $Grad(\{0\}) = \{0\}$ is graded prime since $Grad(\{0\})$ is a graded strongly $1$-absorbing primary ideal. Thus, $R$ is a graded domain. The converse is obvious.
\end{proof}

\begin{lem}\label{2} Let $R$ be a graded ring. Suppose that $P$ and $K$ are graded ideals of $R$. Then $(P:K)=\left\{r\in R:rK\subseteq P\right\}$ is a graded ideal of $R$.
\end{lem}

\begin{proof} Clearly, $(P:K)$ is an ideal of $R$. Let $r\in (P:K)$. Then $rK\subseteq P$ and $r=\displaystyle\sum_{g\in G}r_{g}$. Let $x\in K$. Then since $K$ is graded, $x_{h}\in K$ for all $h\in G$. Now, $r_{g}x_{h}\in h(R)$ for all $g, h\in G$ with $\displaystyle\sum_{g\in G}r_{g}x_{h}=\left(\displaystyle\sum_{g\in G}r_{g}\right)x_{h}=rx_{h}\in rK\subseteq P$ for all $h\in G$. Since $P$ is graded, $r_{g}x_{h}\in P$ for all $g, h\in G$, and then $r_{g}x\in P$ for all $g\in G$, which implies that $r_{g}K\subseteq P$ for all $g\in G$, so $r_{g}\in (P:K)$ for all $g\in G$. Hence, $(P:K)$ is a graded ideal of $R$.
\end{proof}

\begin{lem}\label{Lemma 2.18} Let $P$ be a graded $1$-absorbing primary ideal of $R$ and $K\nsubseteq P$ be a proper graded ideal of $R$. Then $(P : K)$ is a graded primary ideal of $R$.
\end{lem}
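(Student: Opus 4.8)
The plan is to verify the two defining requirements of a graded primary ideal for $(P:K)$: that it is proper, and that the graded primary implication holds. First I would invoke Lemma \ref{2} to record that $(P:K)$ is already a graded ideal, so only the primary behaviour is at stake. For properness, since $K\nsubseteq P$ we have $1\cdot K=K\nsubseteq P$, hence $1\notin(P:K)$ and $(P:K)$ is proper. The crucial preliminary observation is that $P\subseteq(P:K)$ (because $PK\subseteq P$), and therefore $Grad(P)\subseteq Grad((P:K))$; this inclusion is what will let me convert the radical membership produced by the $1$-absorbing hypothesis (which lives in $Grad(P)$) into the radical membership demanded by the primary conclusion (which lives in $Grad((P:K))$).

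For the primary implication I would take homogeneous $x,y\in h(R)$ with $xy\in(P:K)$, i.e. $xyK\subseteq P$, and aim to show $x\in(P:K)$ or $y\in Grad((P:K))$. First I would clear the two easy unit cases: if $x$ is a unit then $y=x^{-1}(xy)\in(P:K)\subseteq Grad((P:K))$, and if $y$ is a unit then $x=y^{-1}(xy)\in(P:K)$; in both cases we are done. So I may assume $x,y$ are nonunits and, arguing contrapositively, assume $y\notin Grad((P:K))$, reducing the goal to $x\in(P:K)$, that is, $xk\in P$ for every homogeneous $k\in K$.

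Now fix a homogeneous $k\in h(R)\cap K$. Since $K$ is proper, $k$ is a nonunit, so $x,k,y$ are three nonunit homogeneous elements with $(x)(k)(y)=xyk\in P$. Applying the defining property of the graded $1$-absorbing primary ideal $P$ with $y$ placed in the distinguished (third) slot yields $xk\in P$ or $y\in Grad(P)$. But $y\in Grad(P)$ would force $y\in Grad((P:K))$ by the inclusion above, contradicting our assumption; hence $xk\in P$. As $k$ was an arbitrary homogeneous element of $K$ and $K$ is graded, summing over homogeneous components gives $xK\subseteq P$, i.e. $x\in(P:K)$, which completes the argument.

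The only genuinely delicate point is the bookkeeping inside the $1$-absorbing hypothesis: one must assign $y$ (not $k$) to the distinguished factor so that the resulting membership concerns the variable we are trying to control, and one must have the inclusion $Grad(P)\subseteq Grad((P:K))$ on hand to absorb that conclusion. Checking that every homogeneous element of $K$ is a nonunit, so that the hypothesis of the $1$-absorbing property is actually applicable, is the other small thing to verify, and it follows immediately from $K$ being a proper ideal.
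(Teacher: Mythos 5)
Your proposal is correct and follows essentially the same route as the paper: both reduce to nonunit $x,y$, apply the graded $1$-absorbing primary property to a product $x\cdot k\cdot y$ with $k$ a homogeneous element of $K$ (necessarily a nonunit since $K$ is proper), and use $Grad(P)\subseteq Grad((P:K))$ to land in the right radical. The only difference is that you argue the contrapositive (assume $y\notin Grad((P:K))$ and range over all homogeneous $k\in K$ to get $x\in(P:K)$), whereas the paper assumes $x\notin(P:K)$ and extracts a single homogeneous witness $z_g\in K$ with $xz_g\notin P$; these are logically the same argument.
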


\begin{proof} By Lemma \ref{2}, $(P:K)$ is a graded ideal of $R$ and it is proper as $K\nsubseteq P$. Let $x, y\in h(R)$ with $xy\in (P : K)$ and $x\notin (P : K)$. Clearly, $y$ is a nonunit element. If $x$ is unit, then $y\in (P : K)\subseteq Grad((P : K))$. Hence, we may assume that $x$ and $y$ are nonunit elements of $R$. Since $x\notin (P : K)$, there is $z\in K$ such that $xz\notin P$, and then $xz_{g}\notin P$ for some $g\in G$. Note that $z_{g}\in K$ as $K$ is graded. But $xyz_{g}\in P$ and $P$ is graded $1$-absorbing primary. Then $y\in Grad(P)\subseteq Grad((P : K))$. Thus, $(P : K)$ is a graded primary ideal of $R$.
\end{proof}

\begin{prop}\label{Proposition 2.19} Let $P$ be a graded strongly $1$-absorbing primary ideal of $R$ and $K\nsubseteq Grad(P)$ be a proper graded ideal of $R$. Then $(P : K)$ is a graded strongly $1$-absorbing primary ideal of $R$.
\end{prop}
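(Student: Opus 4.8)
The plan is to argue directly from the definition, using a regrouping trick that turns membership in the colon ideal into a single application of the defining property of $P$. First I would record two preliminaries: by Lemma \ref{2} the set $(P:K)$ is a graded ideal of $R$, and it is proper because $P\subseteq Grad(P)$ and $K\nsubseteq Grad(P)$ together force $K\nsubseteq P$, so that $1\notin(P:K)$.

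To verify the defining property, I would take nonunit homogeneous $x,y,z\in h(R)$ with $xyz\in(P:K)$ and assume $z\notin Grad(\{0\})$, aiming to deduce $xy\in(P:K)$, i.e. $xyK\subseteq P$. Since $K$ and $P$ are graded it suffices to show $xyk\in P$ for every homogeneous $k\in K$. Fix such a $k$. From $xyz\in(P:K)$ we get $xyzk\in P$, and the key step is to regroup this as $(xk)\,y\,z\in P$: here $xk$ is a homogeneous element of the proper ideal $K$, hence nonunit, while $y,z$ are nonunit by hypothesis, so the triple $(xk,y,z)$ is eligible for the strongly $1$-absorbing primary property of $P$. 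That property gives $(xk)y\in P$ or $z\in Grad(\{0\})$, and since $z\notin Grad(\{0\})$ we conclude $xyk=(xk)y\in P$. Letting $k$ range over the homogeneous elements of $K$ yields $xyK\subseteq P$, i.e. $xy\in(P:K)$, as required. The only delicate points are bookkeeping: that $(P:K)$ is proper (where $K\nsubseteq Grad(P)$ enters via $P\subseteq Grad(P)$), that $xk$ is genuinely a nonunit homogeneous element of $K$, and that testing homogeneous $k$ suffices because $K$ is graded. There is no real analytic obstacle; everything hinges on the observation that $xk\in K$, which lets one absorb the factor $k$ into the first slot of the triple instead of the last.

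Alternatively, one can derive the statement from the structural results already proved, and this is where the hypothesis $K\nsubseteq Grad(P)$ is most natural. Since $P$ is graded $1$-absorbing primary, Lemma \ref{Lemma 2.18} makes $(P:K)$ graded primary, hence graded $1$-absorbing primary. One then shows $Grad((P:K))=Grad(P)$: the inclusion $\supseteq$ is immediate from $P\subseteq(P:K)$, while $\subseteq$ uses that $Grad(P)$ is graded prime (Lemma \ref{Theorem 2}) together with a homogeneous $k\in K\setminus Grad(P)$, so that $x^{n}k\in P\subseteq Grad(P)$ forces $x\in Grad(P)$. Finally I would invoke Theorem \ref{Theorem 2.2} for $P$: if $Grad(P)=Grad(\{0\})$ then $Grad((P:K))=Grad(\{0\})$ and case (1) applies to $(P:K)$; if instead $R$ is graded local with graded maximal ideal $X=Grad(P)$ and $X^{2}\subseteq P$, then $X=Grad((P:K))$ and $X^{2}\subseteq P\subseteq(P:K)$, so case (2) applies. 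In this route the step most likely to require care is the radical computation $Grad((P:K))=Grad(P)$.
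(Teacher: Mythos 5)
Both of your routes are correct. Your second, structural route is essentially the paper's own proof: it applies Lemma \ref{Lemma 2.18} to get that $(P:K)$ is graded primary, computes $Grad((P:K))=Grad(P)$ using a homogeneous $k\in K\setminus Grad(P)$ and the graded primeness of $Grad(P)$, and then invokes Theorem \ref{Theorem 2.2}. The only difference is that the paper first observes that case (2) of Theorem \ref{Theorem 2.2} is vacuous here: if $Grad(P)\neq Grad(\{0\})$ then $R$ is graded local with graded maximal ideal $Grad(P)$, and a proper graded ideal $K$ would then satisfy $K\subseteq Grad(P)$, contradicting the hypothesis; so only the case $Grad(P)=Grad(\{0\})$ needs treatment. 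Your explicit handling of case (2) is harmless but unnecessary. Your first route, by contrast, is genuinely different from the paper's: the regrouping $xyzk=(xk)yz$ with $xk$ a nonunit (since $xk\in K$ and $K$ is proper) homogeneous element lets you conclude directly from the definition, bypassing Theorem \ref{Theorem 2.2}, Lemma \ref{Lemma 2.18}, and the radical computation altogether. It is both more elementary and slightly stronger, since it uses the hypothesis $K\nsubseteq Grad(P)$ only through the weaker consequence $K\nsubseteq P$ (needed to make $(P:K)$ proper); the reduction to homogeneous $k$ via gradedness of $K$ is correctly justified. One small bookkeeping remark: Definition 2.1 as printed omits the word ``nonunit,'' but the abstract, introduction, and all proofs in the paper include it, and your verification that $xk$, $y$, $z$ are nonunit is exactly what is needed for the intended definition.
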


\begin{proof} If $Grad(P)\neq Grad(\{0\})$, then $R$ is graded local with graded maximal ideal $Grad(P)$. In this situation, our assumption $K\nsubseteq Grad(P)$ is not satisfied. Then we must have that $Grad(P)=Grad(\{0\})$ is graded prime. Let $a\in (P : K)$. Then $a_{g}\in (P:K)$ for all $g\in G$ as $(P:K)$ is a graded ideal by Lemma \ref{2}, and then $a_{g}K\subseteq P\subseteq Grad(P)$ for all $g\in G$. Since $K\nsubseteq Grad(P)$, we get $a_{g}\in Grad(P)$ for all $g\in G$, which implies that $a\in Grad(P)$. Hence, $P\subseteq (P : K)\subseteq Grad(P)$. Thus, $Grad((P : K)) =Grad(P) =Grad(\{0\})$. Now, the result follows from Theorem \ref{Theorem 2.2} and Lemma \ref{Lemma 2.18}.
\end{proof}

Let $R$ and $S$ be two $G$-graded rings. Then a ring homomorphism $f:R\rightarrow S$ is said to be graded homomorphism if $f(R_{g})\subseteq S_{g}$ for all $g\in G$.

\begin{lem}\label{3} Suppose that $f:R\rightarrow S$ is a graded homomorphism.
\begin{enumerate}
\item If $K$ is a graded ideal of $S$, then $f^{-1}(K)$ is a graded ideal of $R$.
\item If $P$ is a graded ideal of $R$ with $Ker(f)\subseteq P$, then $f(P)$ is a graded ideal of $f(R)$.
\end{enumerate}
\end{lem}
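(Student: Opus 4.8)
The plan is to handle each part by first disposing of the elementary ring-theoretic content—that the preimage of an ideal, and the image of an ideal under the surjection $f\colon R\to f(R)$, are again ideals—and then concentrating all the effort on the grading condition, since that is exactly where the hypothesis that $f$ is graded does its work.

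For (1), I would start from an arbitrary $x\in f^{-1}(K)$ and write its homogeneous decomposition $x=\sum_{g\in G}x_g$ with $x_g\in R_g$. Applying $f$ and using additivity gives $f(x)=\sum_{g\in G}f(x_g)$, and because $f$ is graded we have $f(x_g)\in S_g$ for every $g$. By uniqueness of the homogeneous decomposition in $S$, this exhibits $f(x_g)$ as the degree-$g$ component of $f(x)$. Since $f(x)\in K$ and $K$ is a graded ideal of $S$, each such component satisfies $f(x_g)\in K$, whence $x_g\in f^{-1}(K)$ for all $g\in G$. That is precisely the assertion that $f^{-1}(K)$ is graded.

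For (2), the first step is to pin down the grading on $f(R)$ itself: from $f(R_g)\subseteq S_g$ together with $f(R)=\sum_{g\in G}f(R_g)$ and the directness of the sum $\bigoplus_{g\in G}S_g$, one obtains that $f(R)$ is $G$-graded with $\big(f(R)\big)_g=f(R_g)=f(R)\cap S_g$. Having fixed this grading, I would take $y\in f(P)$, write $y=f(x)$ with $x\in P$, and use that $P$ is graded to decompose $x=\sum_{g\in G}x_g$ with each $x_g\in P$. Then $y=\sum_{g\in G}f(x_g)$, where $f(x_g)\in\big(f(R)\big)_g$ and $f(x_g)\in f(P)$; by uniqueness of the decomposition inside $f(R)$ these are exactly the homogeneous components of $y$, and each of them lies in $f(P)$. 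Hence $f(P)$ is graded.

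The routine part is the verification that $f^{-1}(K)$ and $f(P)$ are ideals, which I would dispatch in a single line each. The main obstacle—really the only genuine point—is the bookkeeping around uniqueness of homogeneous decompositions: in (2) one must first confirm that $f(R)$ carries the grading $\big(f(R)\big)_g=f(R_g)$ before it even makes sense to speak of the homogeneous components of $y$ \emph{inside} $f(R)$, and only then argue that pushing forward the components of $x$ genuinely produces those of $y$. I do not expect to need the hypothesis $Ker(f)\subseteq P$ for the grading conclusion itself; its role appears to be to keep the image well-behaved (ensuring $f^{-1}(f(P))=P$) in later applications, so I would flag it but not rely on it here.
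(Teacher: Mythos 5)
Your proof is correct and follows essentially the same route as the paper's: in both parts you decompose an element into its homogeneous components, push them through $f$, and invoke uniqueness of homogeneous decompositions together with the hypothesis $f(R_g)\subseteq S_g$. Part (1) is line-for-line the paper's argument. In part (2) you diverge in one genuine respect: the paper takes an \emph{arbitrary} preimage $x\in R$ of $y\in f(P)$ and then uses $Ker(f)\subseteq P$ to force $x\in P$, whereas you choose $x\in P$ with $f(x)=y$ from the outset, which the definition of $f(P)$ always permits. Your flag that the hypothesis $Ker(f)\subseteq P$ is consequently not needed for the conclusion as stated is correct — the image of a graded ideal is a graded ideal of $f(R)$ regardless of the kernel. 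You also supply a detail the paper elides: you first verify that $f(R)$ is graded by $\left(f(R)\right)_{g}=f(R_{g})$ and that the degree-$g$ component of $y$ inside $f(R)$ really is $f(x_{g})$, whereas the paper simply writes $y_{g}=(f(x))_{g}=f(x_{g})$ without justification. Neither difference affects correctness; your version is marginally more careful and marginally more general.
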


\begin{proof}
\begin{enumerate}
\item Clearly, $f^{-1}(K)$ is a an ideal of $R$. Let $x\in f^{-1}(K)$. Then $x\in R$ with $f(x)\in K$, and $x=\displaystyle\sum_{g\in G}x_{g}$ where $x_{g}\in R_{g}$ for all $g\in G$. So, for every $g\in G$, $f(x_{g})\in f(R_{g})\subseteq S_{g}$ such that $\displaystyle\sum_{g\in G}f(x_{g})=f\left(\displaystyle\sum_{g\in G}x_{g}\right)=f(x)\in K$. Since $K$ is graded, $f(x_{g})\in K$ for all $g\in G$, i.e., $x_{g}\in f^{-1}(K)$ for all $g\in G$. Hence, $f^{-1}(K)$ is a graded ideal of $R$.
\item Clearly, $f(P)$ is an ideal of $f(R)$. Let $y\in f(P)$. Then $y\in f(R)$, and so there exists $x\in R$ such that $y=f(x)$. So $f(x)\in f(P)$, which implies that $x\in P$ since $Ker(f)\subseteq P$, and hence $x_{g}\in P$ for all $g\in G$ since $P$ is graded. Thus, $y_{g}=(f(x))_{g}=f(x_{g})\in f(P)$ for all $g\in G$. Therefore, $f(P)$ is a graded ideal of $f(R)$.
\end{enumerate}
\end{proof}

\begin{prop}\label{Proposition 3.1} Suppose that $f:R\rightarrow S$ is a graded homomorphism.
\begin{enumerate}
\item If $f$ is a graded epimorphism and $P$ is a graded strongly $1$-absorbing primary ideal of $R$ containing $Ker(f)$, then $f(P)$ is a graded strongly $1$-absorbing primary ideal of $S$.
\item If $f$ is a graded monomorphism and $K$ is a graded strongly $1$-absorbing primary ideal of $S$, then $f^{-1}(K)$ is a graded strongly $1$-absorbing primary ideal of $R$.
\end{enumerate}
\end{prop}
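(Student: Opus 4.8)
The plan is to transport the defining condition of a graded strongly $1$-absorbing primary ideal across $f$ in each direction, using Lemma \ref{3} to know in advance that $f(P)$ and $f^{-1}(K)$ are graded ideals. For part (1) I would first invoke Lemma \ref{3}(2) to get that $f(P)$ is a graded ideal of $f(R)=S$, and check that it is proper: if $1_{S}=f(p)$ with $p\in P$, then $p-1_{R}\in Ker(f)\subseteq P$ forces $1_{R}\in P$, a contradiction. The structural fact I would lean on is that a graded epimorphism restricts to a surjection of $R_{g}$ onto $S_{g}$ for each $g$ (lift $s\in S_{g}$ to some $r\in R$, write $r=\sum_{h}r_{h}$, and read off $f(r_{g})=s$ by uniqueness of homogeneous components). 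Thus any homogeneous $x',y',z'\in h(S)$ with $x'y'z'\in f(P)$ lift to homogeneous $x,y,z\in h(R)$ of the same degrees, and since $Ker(f)\subseteq P$ gives $f^{-1}(f(P))=P$, we obtain $xyz\in P$. Applying the hypothesis on $P$ yields $xy\in P$ or $z\in Grad(\{0_{R}\})$; pushing forward produces $x'y'=f(xy)\in f(P)$ in the first case, while in the second $z'=f(z)$ is the image of a nilpotent element, hence nilpotent, so $z'\in Grad(\{0_{S}\})$. That is exactly the required dichotomy for $f(P)$.

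For part (2) I would start from Lemma \ref{3}(1), so $f^{-1}(K)$ is a graded ideal, proper because $1_{R}\in f^{-1}(K)$ would give $1_{S}=f(1_{R})\in K$. Given $x,y,z\in h(R)$ with $xyz\in f^{-1}(K)$, the images $f(x),f(y),f(z)$ are homogeneous in $S$ with product $f(xyz)\in K$. I would then apply the defining property of $K$ to get $f(x)f(y)\in K$ or $f(z)\in Grad(\{0_{S}\})$; the first alternative gives $xy\in f^{-1}(K)$ at once, and in the second, $f(z)$ nilpotent combined with the injectivity of $f$ upgrades to $z$ nilpotent, i.e. $z\in Grad(\{0_{R}\})$, as wanted.

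The step I expect to be delicate is the application of $K$'s defining property in part (2), and it is worth isolating why the two directions are not symmetric. In part (1) the lifts $x,y,z$ are automatically nonunit as soon as $x',y',z'$ are, because a ring homomorphism carries units to units, so a unit lift would force a unit image; hence the hypothesis on $P$ transfers with no friction. In part (2) the reverse can fail: a graded monomorphism need not reflect nonunits, since a homogeneous nonunit of $R$ may map to a unit of $S$ (as in $\mathbb{Z}\hookrightarrow\mathbb{Z}_{(p)}$). Reading the definition exactly as stated, with no unit restriction imposed on the triple $x,y,z$, this causes no difficulty and the argument above is complete. If instead one adopts the nonunit formulation used in the surrounding results, then before applying the property of $K$ one must separately guarantee that $f(x),f(y),f(z)$ may be taken nonunit in $S$, and this is the single point where the monomorphism hypothesis does not by itself suffice.
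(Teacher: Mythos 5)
Your argument follows the same route as the paper's proof in both parts: in (1) lift a homogeneous nonunit triple along the epimorphism and use $Ker(f)\subseteq P$ to pull $abc\in f(P)$ back to $xyz\in P$; in (2) push a homogeneous triple forward along the monomorphism and use injectivity only to transfer nilpotence of $f(z)$ back to $z$. Your extra care in (1) --- that homogeneous elements of $S$ admit homogeneous lifts of the same degree, and that such lifts are automatically nonunit because homomorphisms send units to units --- fills in details the paper leaves implicit, but it is the same proof.

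The delicate point you isolate in (2) is not merely a stylistic worry: it is a genuine gap, and it is present in the paper's own proof as well, which applies the defining property of $K$ to $f(a),f(b),f(c)$ without checking that these are nonunits of $S$. The definition is certainly meant in its nonunit form (the abstract says so, and the converse direction of Theorem \ref{Theorem 2.2} --- where $X^{2}\subseteq P$ is said to make $P$ ``trivially'' graded strongly $1$-absorbing primary --- only works under that reading). Under that reading, statement (2) is in fact false as stated: give $\mathbb{Z}$ and $\mathbb{Z}_{(2)}$ the trivial $G$-grading and let $f:\mathbb{Z}\rightarrow\mathbb{Z}_{(2)}$ be the inclusion. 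The ideal $K=4\mathbb{Z}_{(2)}$ is graded strongly $1$-absorbing primary by Theorem \ref{Theorem 2.2}(2), since $\mathbb{Z}_{(2)}$ is (graded) local with maximal ideal $X=2\mathbb{Z}_{(2)}$ and $X^{2}=K$; but $f^{-1}(K)=4\mathbb{Z}$ is not, because $3\cdot 3\cdot 4\in 4\mathbb{Z}$ while $9\notin 4\mathbb{Z}$ and $4\notin Grad(\{0\})=\{0\}$. The obstruction is exactly the one you name: $3$ is a nonunit of $\mathbb{Z}$ that becomes a unit in $\mathbb{Z}_{(2)}$. So your separation of the two readings is the right diagnosis --- under the literal (no unit restriction) reading your proof of (2) is complete, and under the intended nonunit reading no proof can exist without an added hypothesis, e.g.\ that $f$ reflects nonunits.
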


\begin{proof}
\begin{enumerate}
\item By Lemma \ref{3}, $f(P)$ is a graded ideal of $S$. Let $a, b, c\in h(S)$ be nonunit elements such that $abc\in f(P)$. Since $f$ is a graded epimorphism, there exist nonunit elements $x, y, z\in h(R)$ such that $a = f(x), b = f(y)$ and $c = f(z)$. Suppose that $ab\notin f(P)$. Then $xy\notin P$. Hence, since $P$ is a graded strongly $1$-absorbing primary ideal of $R$ and $xyz\in P$ (because $Ker(f)\subseteq P$), we get that $z\in Grad(\{0_{R}\})$. Thus, $c = f(z)\in Grad(\{0_{S}\})$. Consequently, $f(P)$ is a graded strongly $1$-absorbing primary ideal of $S$.

\item By Lemma \ref{3}, $f^{-1}(K)$ is a graded ideal of $R$. Let $a, b, c\in h(R)$ be nonunit elements such that $abc\in f^{-1}(K)$. Suppose that $ab\notin f^{-1}(K)$. Then $f(a)f(b)\notin K$. Hence, since $K$ is a graded strongly $1$-absorbing primary ideal of $S$ and $f(a)f(b)f(c)\in K$, we conclude that $f(c)\in Grad(\{0_{S}\})$. Thus, since $f$ is a graded monomorphism, $c\in Grad(\{0_{R}\})$. Consequently, $f^{-1}(K)$ is a graded strongly $1$-absorbing primary ideal of $R$.
\end{enumerate}
\end{proof}

Let $R$ be a $G$-graded ring and $K$ be a graded ideal of $R$. Then $R/K$ is $G$-graded by $(R/K)_{g}=(R_{g}+K)/K$ for all $g\in G$.

\begin{lem}\label{4} Let $R$ be a graded ring and $K$ be a graded ideal of $R$. If $P$ is a graded ideal of $R$ with $K\subseteq P$, then $P/K$ is a graded ideal of $R/K$.
\end{lem}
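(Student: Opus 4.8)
The plan is to check the two defining properties of a graded ideal of $R/K$: that $P/K$ is an ideal, and that it equals the direct sum of its homogeneous components. The first property is the standard correspondence between ideals of $R$ containing $K$ and ideals of $R/K$, so I would dispose of it quickly; all the real content lies in verifying the grading.

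For the grading, the key step I would isolate first is that the hypothesis $K \subseteq P$ lets me pull elements of $P/K$ back to genuine elements of $P$. Concretely, if $\bar{x} = x + K$ is an arbitrary element of $P/K$, then $\bar{x} = p + K$ for some $p \in P$, so $x - p \in K \subseteq P$ and therefore $x = p + (x - p) \in P$. I expect this coset-lifting to be the only point requiring care, since it is exactly where the assumption $K \subseteq P$ (rather than merely $x \in P + K$) is used; the remainder is bookkeeping.

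Having secured $x \in P$, I would relate the two homogeneous decompositions. Writing $x = \sum_{g \in G} x_g$ with $x_g \in R_g$, the coset decomposes as $\bar{x} = \sum_{g \in G}(x_g + K)$, and by the definition of the grading on $R/K$ each $x_g + K$ lies in $(R_g + K)/K = (R/K)_g$, so it is precisely the degree-$g$ component of $\bar{x}$. Since $P$ is a graded ideal of $R$ and $x \in P$, every $x_g$ lies in $P$, whence each component $x_g + K$ belongs to $P/K$. As $\bar{x}$ was arbitrary and all of its homogeneous components lie in $P/K$, this yields $P/K = \bigoplus_{g \in G}\left((P/K) \cap (R/K)_g\right)$, so $P/K$ is a graded ideal of $R/K$.
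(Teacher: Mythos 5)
Your proof is correct and follows essentially the same route as the paper: verify $P/K$ is an ideal, lift a coset to a representative in $P$, and use the gradedness of $P$ to show each homogeneous component $x_g + K$ lies in $P/K$. The only difference is that you explicitly justify why the representative $x$ can be taken in $P$ (via $K \subseteq P$), a point the paper passes over silently.
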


\begin{proof} Clearly, $P/K$ is an ideal of $R/K$. Let $x+K\in P/K$. Then $x\in P$, and then $x_{g}\in P$ for all $g\in G$ as $P$ is graded, which implies that $x_{g}+K\in P/K$ for all $g\in G$. Hence, $P/K$ is a graded ideal of $R/K$.
\end{proof}

\begin{cor}\label{Corollary 3.2 (1)} Let $K\subseteq P$ be two proper graded ideals of a graded ring $R$. If $P$ is a graded strongly $1$-absorbing primary ideal of $R$, then $P/K$ is a graded strongly $1$-absorbing primary ideal of $R/K$.
\end{cor}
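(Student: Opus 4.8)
The plan is to realize $P/K$ as the homomorphic image of $P$ under a suitable graded epimorphism and then invoke Proposition \ref{Proposition 3.1}(1), which already does all the work of checking the three-element condition on homogeneous elements. Concretely, I would take the canonical projection $\pi : R \to R/K$ given by $\pi(x) = x + K$, and verify that it is a graded epimorphism with kernel contained in $P$ and image $\pi(P) = P/K$.

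First I would confirm that $\pi$ respects the gradings. Since the grading on $R/K$ is $(R/K)_{g} = (R_{g} + K)/K$, we have $\pi(R_{g}) = (R_{g}+K)/K = (R/K)_{g}$ for every $g \in G$, so $\pi(R_{g}) \subseteq (R/K)_{g}$; as $\pi$ is visibly surjective, it is a graded epimorphism. Next, $Ker(\pi) = K \subseteq P$ by hypothesis, and $\pi(P) = \{\,p + K : p \in P\,\} = P/K$, which is a proper graded ideal of $R/K$ by Lemma \ref{4} together with the fact that $P \neq R$ forces $P/K \neq R/K$. With these facts in hand, Proposition \ref{Proposition 3.1}(1) applies verbatim: $P$ is a graded strongly $1$-absorbing primary ideal of $R$ containing $Ker(\pi)$, hence its image $\pi(P) = P/K$ is a graded strongly $1$-absorbing primary ideal of $\pi(R) = R/K$, which is precisely the assertion.

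There is no genuine obstacle here; the content is entirely absorbed by Proposition \ref{Proposition 3.1}(1), and the only items to confirm are the routine structural points above (that $\pi$ is graded, that $\pi(P) = P/K$, and that $P/K$ is proper). The two subtleties are already handled internally in the proof of that proposition, namely that a nonunit homogeneous element of $R/K$ lifts to a nonunit homogeneous element of $R$ (because a unit maps to a unit under $\pi$) and that $\pi$ carries $Grad(\{0_{R}\})$ into $Grad(\{0_{R/K}\})$ (because a nilpotent homogeneous component maps to a nilpotent homogeneous component). Thus the corollary is a clean specialization of the epimorphism result to the canonical quotient map, and I would keep the proof to essentially one line once the setup is recorded.
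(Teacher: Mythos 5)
Your proposal is correct and follows exactly the paper's own argument: apply Proposition \ref{Proposition 3.1}(1) to the canonical projection $\pi:R\rightarrow R/K$, which is a graded epimorphism with $Ker(\pi)=K\subseteq P$, so $\pi(P)=P/K$ is graded strongly $1$-absorbing primary. The extra verifications you record (that $\pi$ respects the grading and that $P/K$ is proper) are routine and consistent with what the paper leaves implicit.
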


\begin{proof} Define $f:R\rightarrow R/K$ by $f(x)=x+K$. Then $f$ is a graded epimorphism with $Ker(f)=K\subseteq P$. So, by Proposition \ref{Proposition 3.1}, $f(P)=P/K$ is a graded strongly $1$-absorbing primary ideal of $R/K$.
\end{proof}

\begin{cor}\label{Corollary 3.2 (2)} Let $P$ be a graded strongly $1$-absorbing primary ideal of $R$ and $S$ be a graded subring of $R$. Then $P\bigcap S$ is a graded strongly $1$-absorbing primary ideal of $S$.
\end{cor}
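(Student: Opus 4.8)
The plan is to exhibit $P\cap S$ as the contraction of $P$ along the inclusion map and then quote Proposition \ref{Proposition 3.1}(2), so that essentially no new computation is required. First I would define the inclusion $\iota:S\rightarrow R$ by $\iota(x)=x$. Since $S$ is a graded subring of $R$, its homogeneous components satisfy $S_{g}\subseteq R_{g}$ for every $g\in G$; hence $\iota(S_{g})\subseteq R_{g}$, which is exactly the condition for $\iota$ to be a graded homomorphism, and $\iota$ is clearly injective, so it is a graded monomorphism.

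Next I would identify the contraction. For $x\in S$ we have $\iota(x)=x\in P$ if and only if $x\in P\cap S$, so $\iota^{-1}(P)=P\cap S$. I would also record that $P\cap S$ is proper in $S$: since $P$ is a proper ideal of $R$ we have $1\notin P$, and as $1\in S$ this gives $1\notin P\cap S$.

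Finally, I would apply Proposition \ref{Proposition 3.1}(2) with the role of the domain played by $S$, the role of the codomain played by $R$, and the graded strongly $1$-absorbing primary ideal of the codomain taken to be $P$. Since $\iota$ is a graded monomorphism and $P$ is graded strongly $1$-absorbing primary in $R$, the proposition yields at once that $\iota^{-1}(P)=P\cap S$ is a graded strongly $1$-absorbing primary ideal of $S$, as desired.

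I do not expect a serious obstacle here; the only point demanding care is the bookkeeping of which ring plays which role in Proposition \ref{Proposition 3.1}(2), because that statement is phrased with $f:R\rightarrow S$ while the natural map here runs from the subring into the overring. Once the inclusion is set up in this direction and the identity $\iota^{-1}(P)=P\cap S$ is in hand, the conclusion is immediate.
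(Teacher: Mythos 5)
Your proposal is correct and matches the paper's own proof exactly: both define the inclusion $\iota:S\rightarrow R$, observe that it is a graded monomorphism, and invoke Proposition \ref{Proposition 3.1}(2) to conclude that $\iota^{-1}(P)=P\bigcap S$ is graded strongly $1$-absorbing primary in $S$. Your additional remarks on properness and on which ring plays which role are sensible bookkeeping but introduce nothing beyond the paper's argument.
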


\begin{proof} Define $f:S\rightarrow R$ by $f(x)=x$. Then $f$ is a graded monomorphism. So, by Proposition \ref{Proposition 3.1}, $f^{-1}(P)=P\bigcap S$ is a graded strongly $1$-absorbing primary ideal of $S$.
\end{proof}

\begin{cor}\label{5} Let $P$ be a graded strongly $1$-absorbing primary ideal of $R$. Then $P_{e}$ is a strongly $1$-absorbing primary ideal of $R_{e}$.
\end{cor}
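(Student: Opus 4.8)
The plan is to verify the defining property of an (ungraded) strongly $1$-absorbing primary ideal directly, exploiting the fact that every element of $R_{e}$ is homogeneous so that the graded hypothesis on $P$ applies to arbitrary members of $R_{e}$. First I would record two preliminary facts. Since $P$ is graded we have $P=\bigoplus_{g\in G}(P\cap R_{g})$, whence its degree-$e$ component is $P_{e}=P\cap R_{e}$; moreover $P_{e}$ is proper in $R_{e}$ because $1\in R_{e}$ while $1\notin P$. Second, I would note that for $x\in R_{e}$ the element $x$ is a unit of $R_{e}$ if and only if it is a unit of $R$: one direction is immediate, and for the other, writing $x^{-1}=\sum_{g}y_{g}$ and comparing degree-$e$ components in $xx^{-1}=1$ yields $xy_{e}=1$ with $y_{e}\in R_{e}$. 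Hence a triple of nonunits of $R_{e}$ is automatically a triple of homogeneous nonunits of $R$.

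With these in hand the core argument is short. Take nonunit elements $x,y,z\in R_{e}$ with $xyz\in P_{e}\subseteq P$. By the previous paragraph $x,y,z\in h(R)$ are nonunit in $R$, so the graded strongly $1$-absorbing primary property of $P$ gives $xy\in P$ or $z\in Grad(\{0\})$. In the first case $xy\in R_{e}R_{e}\subseteq R_{e}$, so $xy\in P\cap R_{e}=P_{e}$. In the second case $z\in R_{e}$ is homogeneous, so $z\in Grad(\{0\})$ forces $z^{n}=0$ for some $n\in\mathbb{N}$, i.e.\ $z$ is nilpotent in $R_{e}$. These are precisely the two alternatives in the definition of a strongly $1$-absorbing primary ideal of $R_{e}$, so $P_{e}$ is such an ideal.

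The only point requiring care---the main obstacle, such as it is---is interpreting the words \emph{nonunit} and \emph{radical of zero} in the correct ring. The unit-matching observation settles the first, and the identity $Grad(\{0\})\cap R_{e}=\sqrt{\{0\}}$ (the nilradical of $R_{e}$) settles the second, since for homogeneous $z\in R_{e}$ the graded radical of $\{0\}$ collapses to the ordinary nilradical. As an alternative one could bypass the direct computation entirely and apply Corollary \ref{Corollary 3.2 (2)} with $S=R_{e}$, the graded subring concentrated in degree $e$; since then $h(R_{e})=R_{e}$ and $Grad(\{0\})$ in $R_{e}$ is its nilradical, being a graded strongly $1$-absorbing primary ideal of $R_{e}$ coincides verbatim with being a strongly $1$-absorbing primary ideal of $R_{e}$.
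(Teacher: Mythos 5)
Your proposal is correct. The paper's own proof is a one-liner: apply Corollary \ref{Corollary 3.2 (2)} with $S=R_{e}$ --- which is precisely the alternative you mention in your closing paragraph. Your primary argument is instead a direct verification, and it is worth noting what that buys: the paper's one-line deduction only yields that $P_{e}$ is a \emph{graded} strongly $1$-absorbing primary ideal of the trivially graded ring $R_{e}$, and silently identifies this with the ungraded notion; your proof makes the two translation points explicit, namely that an element of $R_{e}$ is a unit of $R_{e}$ if and only if it is a unit of $R$ (via the degree-$e$ component of $xx^{-1}=1$), and that $Grad(\{0\})\cap R_{e}$ is the nilradical of $R_{e}$. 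Both points are genuinely needed even to make the paper's shortcut rigorous, so your longer route is the more complete one; the identification $P_{e}=P\cap R_{e}$ and the properness of $P_{e}$ are also correctly handled. In short: same mathematical content, but your write-up supplies the details the paper leaves implicit, and you independently recovered the paper's argument as a remark.
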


\begin{proof} Apply Corollary \ref{Corollary 3.2 (2)} with $S=R_{e}$.
\end{proof}

Let $S$ be a multiplicatively closed subset of $h(R)$. Then $S^{-1}R$ is a graded ring with $(S^{-1}R)_{g}=\left\{  \frac{a}{s},a\in R_{h}, s\in S\cap R_{hg^{-1}%
}\right\}$.

\begin{prop}\label{Proposition 3.3} Let $R$ be a graded ring, $S$ be a multiplicatively closed subset of $h(R)$ and $P$ is a graded strongly $1$-absorbing primary ideal of $R$ such that $P\bigcap S = \emptyset$. Then $S^{-1}P$ is a graded strongly $1$-absorbing primary ideal of $S^{-1}R$.
\end{prop}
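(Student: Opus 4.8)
The plan is to verify the definition of graded strongly $1$-absorbing primary directly for $S^{-1}P$, rather than routing through the characterization of Theorem \ref{Theorem 2.2}. First I would record the standard facts that $S^{-1}P$ is a graded ideal of $S^{-1}R$ and that it is proper: since $S$ is multiplicatively closed, any $x\in P\cap S$ would force a power of $x$ into $P\cap S$, so the hypothesis $P\cap S=\emptyset$ guarantees $\frac{1}{1}\notin S^{-1}P$, whence $S^{-1}P\neq S^{-1}R$.

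Next I would take nonunit homogeneous elements $\frac{a}{s},\frac{b}{t},\frac{c}{u}\in h(S^{-1}R)$, so that $a,b,c\in h(R)$ and $s,t,u\in S$, with $\frac{abc}{stu}\in S^{-1}P$. The first thing to establish is the nonunit bookkeeping: if $a\in h(R)$ were a unit of $R$, its homogeneous inverse $a^{-1}$ would make $\frac{a}{s}$ a unit of $S^{-1}R$ (with inverse $\frac{sa^{-1}}{1}$, since $\frac{a}{s}\cdot\frac{sa^{-1}}{1}=\frac{s}{s}=\frac{1}{1}$); hence $\frac{a}{s}$ nonunit forces $a$ nonunit, and likewise $b,c$ are nonunit in $R$. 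Then I would clear denominators: $\frac{abc}{stu}\in S^{-1}P$ means $\frac{abc}{stu}=\frac{p}{w}$ for some $p\in P$ and $w\in S$, so there is $v\in S$ with $vw\,abc=v\,stu\,p\in P$. Writing $d=vw\in S\subseteq h(R)$, this yields the homogeneous membership $(da)bc=d\,abc\in P$.

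The crux is that $da,b,c$ are three \emph{nonunit} homogeneous elements: $b,c$ were just shown nonunit, and $da$ is nonunit because $a$ is nonunit (were $da$ a unit, $a$ would divide $1$ in the commutative ring $R$). Applying the graded strongly $1$-absorbing primary property of $P$ to $(da)bc\in P$ gives $dab=(da)b\in P$ or $c\in Grad(\{0_{R}\})$. In the first case $\frac{a}{s}\cdot\frac{b}{t}=\frac{ab}{st}=\frac{dab}{dst}\in S^{-1}P$, since $dab\in P$ and $dst\in S$. In the second case $c$ is a homogeneous element of $Grad(\{0_{R}\})$, hence nilpotent, so $\frac{c}{u}$ is nilpotent in $S^{-1}R$ and therefore lies in $Grad(\{0_{S^{-1}R}\})$. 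Either way the defining condition holds, so $S^{-1}P$ is graded strongly $1$-absorbing primary.

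I expect the only genuinely delicate point to be this nonunit bookkeeping combined with the clearing factor $d$: one must check both that passing from $\frac{a}{s}$ to $a$ preserves nonunitness and that multiplying by $d\in S$ cannot turn a nonunit into a unit, so that the triple $da,b,c$ fed into the hypothesis really consists of homogeneous nonunits. Everything else — the properness and gradedness of $S^{-1}P$, and the translation of the two alternative conclusions back into $S^{-1}R$ (in particular that a homogeneous nilpotent stays nilpotent after localization) — is routine localization computation.
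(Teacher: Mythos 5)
Your proof is correct and follows essentially the same route as the paper's: clear denominators by an element $d\in S$, absorb it into the first factor so that $(da)bc\in P$, apply the defining property of $P$ in $R$, and translate the two alternatives back to $S^{-1}R$. The only difference is that you spell out the nonunit bookkeeping (that $\frac{a}{s}$ nonunit forces $a$ nonunit, and that $da$ remains nonunit), which the paper leaves implicit.
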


\begin{proof} Since $P\bigcap S =\emptyset$, $S^{-1}P$ is a proper graded ideal of $S^{-1}R$. Let $\frac{x}{s_{1}}\frac{y}{s_{2}}\frac{z}{s_{3}}\in S^{-1}P$ for some nonunit elements $x, y, z\in h(R)$ and $s_{1}, s_{2}, s_{3}\in S$. Then there is $u\in S$ such that $uxyz\in P$. Suppose that $\frac{x}{s_{1}}\frac{y}{s_{2}}\notin S^{-1}P$. Then $uxy\notin P$. Hence, $z\in Grad(\{0_{R}\})$ since $P$ is a graded strongly $1$-absorbing primary ideal of $R$. Thus, $\frac{z}{s_{3}}\in S^{-1}Grad(\{0_{R}\}=Grad(\{0_{S^{-1}R}\})$.
\end{proof}

Let $R$ be a $G$-graded ring. Then $R[X]$ is $G$-graded by $\left(R[X]\right)_{g}=R_{g}[X]$ for all $g\in G$.

\begin{lem}\label{6} Let $R$ be a $G$-graded ring and $P$ be a graded ideal of $R$. Then $P[X]$ is a graded ideal of $R[X]$.
\end{lem}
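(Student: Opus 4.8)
The plan is to first note that $P[X]$ is an ideal of $R[X]$ by a routine check (it is closed under addition and absorbs multiplication by $R[X]$ because $P$ absorbs multiplication by $R$), and then to verify the grading condition in the componentwise form used throughout the paper: for every $f\in P[X]$, each homogeneous component $f_{g}$ of $f$ with respect to the grading $(R[X])_{g}=R_{g}[X]$ again lies in $P[X]$. This reduces everything to decomposing the coefficients of $f$.

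First I would take an arbitrary $f=\sum_{i}a_{i}X^{i}\in P[X]$, so that each coefficient $a_{i}\in P$. Since $P$ is a graded ideal of $R$, each coefficient decomposes as $a_{i}=\sum_{g\in G}(a_{i})_{g}$ with $(a_{i})_{g}\in P\cap R_{g}$. Regrouping the double sum by the grading index $g$ gives $f=\sum_{g\in G}f_{g}$, where $f_{g}=\sum_{i}(a_{i})_{g}X^{i}\in R_{g}[X]=(R[X])_{g}$ is exactly the degree-$g$ homogeneous component of $f$. It then remains only to observe that each $f_{g}\in P[X]$, which is immediate because every coefficient $(a_{i})_{g}$ of $f_{g}$ lies in $P\cap R_{g}\subseteq P$. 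Hence $f=\sum_{g\in G}f_{g}$ with each $f_{g}\in P[X]\cap (R[X])_{g}$, which is precisely the assertion that $P[X]$ is a graded ideal of $R[X]$.

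There is essentially no serious obstacle here; the only point requiring a word of care is the bookkeeping of the double sum. Specifically, $f$ has only finitely many nonzero coefficients $a_{i}$, and each such coefficient has only finitely many nonzero homogeneous parts $(a_{i})_{g}$, so every $f_{g}$ is a genuine polynomial and all but finitely many $f_{g}$ vanish. This finiteness is what legitimizes the interchange $\sum_{i}\sum_{g}=\sum_{g}\sum_{i}$ and guarantees that the resulting decomposition of $f$ is a finite sum of homogeneous pieces, as required by the definition of a graded ideal.
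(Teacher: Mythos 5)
Your proposal is correct and follows essentially the same route as the paper: take $f\in P[X]$, decompose each coefficient $a_i$ into its homogeneous parts $(a_i)_g\in P$ using that $P$ is graded, and observe that the degree-$g$ component $(f)_g=\sum_i (a_i)_g X^i$ therefore lies in $P[X]$. Your extra remark on the finiteness of the double sum is a harmless elaboration of a point the paper leaves implicit.
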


\begin{proof}Let $f(X)\in P[X]$. Then $f(X) = a_{0} +a_{1}X +...+a_{n}X^{n}$ for some $a_{0}, a_{1},..., a_{n}\in P$. Since $P$ is a graded ideal, we have $(a_{k})_{g}\in P$ for all $k=0, 1,..., n$ and $g\in G$, and then $(f(X))_{g} = (a_{0})_{g} +(a_{1})_{g}X +...+(a_{n})_{g}X^{n}\in P[X]$ for all $g\in G$. Hence, $P[X]$ is a graded ideal of $R[X]$.
\end{proof}

\begin{prop}\label{Proposition 3.4} Let $R$ be a graded ring and $P$ be a proper graded ideal of $R$.
\begin{enumerate}
\item $R[X]$ has a graded strongly $1$-absorbing primary ideal if and only if $Grad(\{0_{R}\})$ is a graded prime ideal of $R$.

\item If $P[X]$ is a graded strongly $1$-absorbing primary ideal of $R[X]$, then $P$ is a graded strongly $1$-absorbing primary ideal of $R$.

\item The graded ideal $P + XR[X]$ is never a graded strongly $1$-absorbing primary ideal of $R[X]$.

\item $P[X]$ is a graded strongly $1$-absorbing primary ideal of $R[X]$ if and only if $P[X]$ is graded primary and $Grad(P)=Grad(\{0\})$.
\end{enumerate}
\end{prop}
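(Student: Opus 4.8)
The plan rests on two structural facts about $R[X]$ that I would establish first and then reuse throughout. The first is that the graded radical extends coefficientwise: a homogeneous polynomial in $(R[X])_g=R_g[X]$ has a power in $P[X]$ precisely when each of its coefficients has a power in $P$, and since for homogeneous elements the graded radical and the ordinary radical coincide, a componentwise check gives $Grad(P[X])=Grad(P)[X]$, and in particular $Grad(\{0_{R[X]}\})=Grad(\{0_R\})[X]$. Consequently the condition $Grad(P[X])=Grad(\{0_{R[X]}\})$ in $R[X]$ is equivalent to $Grad(P)=Grad(\{0\})$ in $R$. The second fact is that $R[X]$ is never graded local: both $X$ and $1-X$ are homogeneous of degree $e$ and are nonunits (neither is a unit because $X$ is not nilpotent when $R\neq 0$), so $R[X]X$ and $R[X](1-X)$ are proper graded ideals by Proposition \ref{1}; if $R[X]$ had a unique graded maximal ideal $M$, both would lie in $M$, forcing $1=X+(1-X)\in M$, a contradiction.

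For part (1), since $R[X]$ is not graded local, Theorem \ref{Theorem 2.6} reduces the existence of a graded strongly $1$-absorbing primary ideal to the single condition that $Grad(\{0_{R[X]}\})=Grad(\{0_R\})[X]$ be graded prime. I would then show that $Grad(\{0_R\})[X]$ is graded prime in $R[X]$ iff $Grad(\{0_R\})$ is graded prime in $R$: the forward direction follows by restricting to constant homogeneous elements, and the reverse direction I would get by passing to the graded domain $R/Grad(\{0_R\})$ and comparing top coefficients of two homogeneous polynomials whose product vanishes, which shows $(R/Grad(\{0_R\}))[X]$ has no homogeneous zero divisors. Part (2) comes for free from the machinery already in the paper: $R$ is a graded subring of $R[X]$ and $P[X]\cap R=P$, so Corollary \ref{Corollary 3.2 (2)} applied to the graded strongly $1$-absorbing primary ideal $P[X]$ yields that $P=P[X]\cap R$ is graded strongly $1$-absorbing primary in $R$.

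For part (3), I would give one universal witness against the definition. An element of $P+XR[X]$ is exactly a polynomial whose constant coefficient lies in $P$; in particular $P+XR[X]$ is proper since $1\notin P$. Take the degree-$e$ nonunit homogeneous elements $x=1+X$, $y=1-X$, $z=X$ (nonunits because $X$ is not nilpotent). Then $xyz=(1-X^2)X\in XR[X]\subseteq P+XR[X]$, while $xy=1-X^2$ has constant coefficient $1\notin P$, so $xy\notin P+XR[X]$, and $z=X\notin Grad(\{0_{R[X]}\})$ since its coefficient $1$ is not nilpotent. This violates the defining property, so $P+XR[X]$ is never graded strongly $1$-absorbing primary.

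Part (4) is the crux and the step where I expect the real work. Applying Theorem \ref{Theorem 2.2} to $P[X]$ and using that $R[X]$ is not graded local to discard alternative (2), I obtain that $P[X]$ is graded strongly $1$-absorbing primary iff $P[X]$ is graded $1$-absorbing primary and $Grad(P[X])=Grad(\{0_{R[X]}\})$, the latter being $Grad(P)=Grad(\{0\})$ by the first paragraph. It then remains to upgrade ``graded $1$-absorbing primary'' to ``graded primary'' for ideals of the form $P[X]$. The device is multiplication by $X$: given homogeneous $a,b$ with $ab\in P[X]$ and $a\notin P[X]$, the unit cases are trivial, so I may assume $a,b$ are nonunits; then $a\cdot X\cdot b=X(ab)\in P[X]$ with $a,X,b$ all nonunit homogeneous, and the $1$-absorbing primary hypothesis forces $aX\in P[X]$ or $b\in Grad(P[X])$. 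Since multiplication by $X$ only shifts coefficients, $aX\in P[X]$ is equivalent to $a\in P[X]$, which is excluded, so $b\in Grad(P[X])$ and $P[X]$ is graded primary; the converse implication is the standard fact that graded primary ideals are graded $1$-absorbing primary. I expect the only delicate points to be the bookkeeping around nonunits and the coefficient-shift identity $aX\in P[X]\Leftrightarrow a\in P[X]$.
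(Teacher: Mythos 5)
Your proposal is correct, and for parts (1) and (2) it coincides with the paper's proof: part (1) is the same reduction via Theorem \ref{Theorem 2.6} together with the identity $Grad(\{0_{R[X]}\})=(Grad(\{0_{R}\}))[X]$, and part (2) is the same one-line application of Corollary \ref{Corollary 3.2 (2)}. For part (3) you argue differently: you exhibit the explicit degree-$e$ homogeneous nonunits $1+X$, $1-X$, $X$ violating the definition directly, whereas the paper deduces from Theorem \ref{Theorem 2.2} that, $R[X]$ being non--graded-local, any graded strongly $1$-absorbing primary ideal must lie inside $Grad(\{0_{R[X]}\})$, which $P+XR[X]$ does not since it contains $X$. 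The paper's route is shorter; yours is self-contained and does not pass through the characterization theorem. For part (4) your argument is in fact more complete than the paper's: the paper jumps from ``graded strongly $1$-absorbing primary'' to ``graded $Grad(\{0_{R[X]}\})$-primary'' without justifying why a graded $1$-absorbing primary ideal of the form $P[X]$ must actually be graded primary, while your multiplication-by-$X$ device (apply the $1$-absorbing condition to $a\cdot X\cdot b\in P[X]$ and use the coefficient-shift equivalence $aX\in P[X]\Leftrightarrow a\in P[X]$) supplies exactly that missing upgrade and is sound. The two auxiliary facts you establish at the outset --- $Grad(P[X])=Grad(P)[X]$ and the non--graded-locality of $R[X]$ via the degree-$e$ nonunits $X$ and $1-X$ summing to $1$ --- are both correct and are used by the paper as well, the first explicitly and the second only tacitly.
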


\begin{proof}
\begin{enumerate}
\item From Theorem \ref{Theorem 2.6}, $R[X]$ has a graded strongly $1$-absorbing primary ideal if and only if $Grad(\{0_{R[X]}\})$ is graded prime (since $R[X]$ is never graded local). Moreover, $Grad(\{0_{R[X]}\})=(Grad(\{0_{R}\}))[X]$ and  $(Grad(\{0_{R}\}))[X]$ is a graded prime ideal of $R[X]$ if and only $Grad(\{0_{R}\})$ is a graded prime ideal of $R$.

\item If $P[X]$ is a graded strongly $1$-absorbing primary ideal of $R[X]$, then by Corollary \ref{Corollary
3.2 (2)}, $P = P[X]\bigcap R$ is a graded strongly $1$-absorbing primary ideal of $R$.

\item Note that $X=1.X\in R_{e}X\subseteq R_{e}[X]=\left(R[X]\right)_{e}$, and so $X\in h(R[X])$, and then by Lemma \ref{1}, $XR[X]$ is a graded ideal of $R[X]$, and then also by Lemma \ref{1}, $P+XR[X]$ is a graded ideal of $R[X]$. Since $R[X]$ is not graded local and $P+XR[X]\nsubseteq Grad(\{0_{R[X]}\})$ (because $X\notin Grad(\{0_{R[X]}\})$), $P + XR[X]$ is never a graded strongly $1$-absorbing primary ideal of $R[X]$.

\item Since $R[X]$ is not graded local, $P[X]$ is a graded strongly $1$-absorbing primary ideal of $R[X]$ if and only if $P[X]$ is graded $Grad(\{0_{R[X]}\})$-primary if and only if $P[X]$ is graded primary and $(Grad(\{0_{R}\}))[X]=Grad(\{0_{R[X]}\}) =Grad(P[X]) =(Grad(P))[X]$ if and only if $P[X]$ is graded primary and $Grad(\{0_{R}\}) =Grad(P)$.
\end{enumerate}
\end{proof}

\end{document}